\newtheorem{Def}{Definition}[section]
\newtheorem{Th}{Theorem}[section]
\newtheorem{Ex}{Example}[section]
\newtheorem{Lemma}{Lemma}[section]
\newtheorem{Prop}{Proposition}[section]
\newtheorem{Cor}{Corollary}[section]
\newtheorem{Rem}{Remark}[section]
\DeclareMathOperator{\cov}{\sf cov}
\DeclareMathOperator{\Int}{Int}
\newcommand{\Hu}{{\sf H}}
\newcommand{\Me}{{\sf M}}
\newcommand{\Ro}{{\sf R}}
\newcommand{\Hub}{{\sf H}\text{-bounded}}
\newcommand{\Meb}{{\sf M}\text{-bounded}}
\newcommand{\Rob}{{\sf R}\text{-bounded}}
\newcommand{\Ub}{\Upsilon\text{-bounded}}
\newcommand{\U}{\Upsilon}
\begin{document}

\title[On certain localized version of uniform selection principles]{On certain localized version of uniform selection principles}

\author[ N. Alam, D. Chandra ]{ Nur Alam$^*$, Debraj Chandra$^*$ }
\newcommand{\acr}{\newline\indent}
\address{\llap{*\,}Department of Mathematics, University of Gour Banga, Malda-732103, West Bengal, India}
\email{nurrejwana@gmail.com, debrajchandra1986@gmail.com}

\thanks{ The first author
is thankful to University Grants Commission (UGC), New Delhi-110002, India for granting UGC-NET Junior Research Fellowship (1173/(CSIR-UGC NET JUNE 2017)) during the tenure of which this work was done.}

\subjclass{Primary: 54D20, 54E15; Secondary: 54E35, 54E99}

\maketitle
\begin{abstract}
We intend to localize the selection principles in uniform spaces (Ko\v{c}inac, 2003) by introducing their local variations, namely locally $\Upsilon$-bounded spaces (where $\Upsilon$ is Menger, Hurewicz or Rothberger). It has been observed that the difference between uniform selection principles and the corresponding local correlatives as introduced here is reasonable enough to discuss about these new notions. Certain observations using the critical cardinals (on the uniform selection principles which have not studied before) as well as preservation like properties (on the local versions) are presented. The interrelationships between the notions considered in this paper are outlined into an implication diagram. Certain interactions between these local variations are also investigated. We present several examples to illustrate the distinguishable behaviour of the new notions.
\end{abstract}
\smallskip

\noindent{\bf\keywordsname{}:} {Uniform space, selection principles, ${\sf M}$-bounded, ${\sf H}$-bounded, ${\sf R}$-bounded, locally ${\sf M}$-bounded, locally ${\sf H}$-bounded, locally ${\sf R}$-bounded, locally precompact, locally pre-Lindel\"{o}f.}

\section{Introduction}
There is a long illustrious history of study of selection principles in set-theoretic topology. This vast field in topology became more popular and had attracted a lot of researcher's attention in the last twenty five years after Scheepers' seminal paper \cite{coc1} (see also \cite{coc2}), where a systematic study in this fascinating field was initiated. Since then, various topological notions have been defined or characterized in terms of the classical selection principles. Interested readers may explore the survey papers \cite{SRSP,SCTU,Survey} for more information on this topic.

In 2003, Ko\v{c}inac \cite{SPUS} introduced the study of selection principles in uniform spaces by defining uniform analogues of Menger, Hurewicz and Rothberger covering properties, namely $\Meb$, $\Hub$ and $\Rob$ respectively and differentiated these uniform variations from classical Menger, Hurewicz and Rothberger properties. Interestingly it was observed that these uniform covering properties can also be defined in terms of star selection principles in unform spaces.

Later in 2013, Ko\v{c}inac and K\"{u}nzi \cite{SPUR} further extended the study to quasi-uniform spaces. For more information about the uniform selection principles, we refer the reader to consult the papers \cite{SRSP,SCTU,Maio,kocqm,BPFS,UBFS} and references therein.

This paper is a continuation of the study of uniform selection principles started in \cite{SPUS} and is organised as follows. In Section 3, we present certain observations on uniform selection principles (that were not at all investigated earlier in uniform structures), which seem to be effective in our context.
In Section 4, we make an effort to extend the concept of uniform selection principles by introducing local variations of these selection principles, namely locally $\Meb$, locally $\Hub$ and locally $\Rob$ spaces (for similar type of investigations, see \cite{dcna21}). Certain situations are described which witness that these local variations behave much differently from the uniform selection principles. Later in this section, preservation like properties of the new notions are investigated carefully and the interrelationships between these new notions are also discussed.
Section 5 is the final portion of this article, which is devoted to present illustrative examples. It is shown that the class consisting of each of these local variations is strictly larger than the class containing the corresponding uniform counterparts. We also present exemplary observations of their perceptible behaviours.

\section{Preliminaries}
For undefined notions and terminologies, see \cite{Engelking}. We start with some basic information about uniform spaces.

Let $X$ be a set and let $A,B\subseteq X\times X$. We define $A^{-1}=\{(x,y) : (y,x)\in A\}$ and $A\circ B=\{(x,y) : \exists \,z\in X \:\text{such that}\: (x,z)\in A \;\text{and}\; (z,y)\in B\}$. The diagonal of $X\times X$ is the set $\Delta=\{(x,x) : x\in X\}$. A set $U\subseteq X\times X$ is said to be an entourage of the diagonal if $\Delta\subseteq U$ and $U^{-1}=U$. The family of all entourages of the diagonal $\Delta\subseteq X\times X$ will be denoted by $E_X(\Delta)$. If $F\subseteq X$ and $U\in E_X(\Delta)$, then $U[F]=\cup_{x\in F}U[x]$, where $U[x]=\{y\in X : (x,y)\in U\}$. Recall that a uniform space can be described equivalently in terms of either a diagonal uniformity or a covering uniformity \cite{Engelking} (see also \cite{Tukey,Borubaev}). In this paper we use diagonal uniformity to define a uniform space.
 A uniformity on a set $X$ is a subfamily $\mathbb{U}$ of $E_X(\Delta)$ which satisfies the following conditions.
 (i) If $U\in\mathbb{U}$ and $V\in E_X(\Delta)$ with $U\subseteq V$, then $V\in\mathbb{U}$; (ii) If $U,V\in\mathbb{U}$, then $U\cap V\in\mathbb{U}$; (iii) For every $U\in\mathbb{U}$, there exists a $V\in\mathbb{U}$ such that $V\circ V\subseteq U$; and (iv) $\cap\mathbb{U}=\Delta$. The pair $(X,\mathbb{U})$ is called a uniform space \cite{Engelking}.
Clearly, every uniform space $(X,\mathbb{U})$ is a topological space. The family $\tau_{\mathbb{U}}=\{O\subseteq X : \:\text{for each} \: x\in O\:\text{there exists a}\: U\in\mathbb{U}\:\text{such that}\: U[x]\subseteq O\}$ is the topology on $X$ generated by the uniformity $\mathbb{U}$. It is well known that the topology of a space $X$ can be induced by a uniformity on $X$ if and only if $X$ is Tychonoff (see \cite[Theorem 8.1.20]{Engelking}). If $(X,d)$ is a metric space, then the family $\{U_\varepsilon : \varepsilon>0\}$, where $U_\varepsilon=\{(x,y)\in X\times X : d(x,y)<\varepsilon\}$, is a base for the uniformity $\mathbb{U}$ induced by the metric $d$. Moreover, the topologies induced on $X$ by the uniformity $\mathbb{U}$ and by the metric $d$ coincide.
By a subspace $Y$ of a uniform space $(X,\mathbb{U})$ we mean the uniform space $(Y,\mathbb{U}_Y)$, where $Y\subseteq X$ and $\mathbb{U}_Y=\{(Y\times Y)\cap U : U\in\mathbb{U}\}$ (which is called the relative uniformity on $Y$).
Let $\mathcal{F}$ be a family of subsets of $X$. We say that $\mathcal{F}$ contains arbitrarily small sets if for every $U\in\mathbb{U}$ there exists a $F\in\mathcal{F}$ such that $F\times F\subseteq U$ (see \cite{Engelking}).
We say that $X$ is complete if every family $\mathcal{F}$ of closed subsets of $X$ which has the finite intersection property and contains arbitrarily small sets has nonempty intersection. A uniformity $\mathbb{U}$ on a set $X$ is complete if the space $(X,\mathbb{U})$ is complete \cite{Engelking}. A function $f:(X,\mathbb{U})\to(Y,\mathbb{V})$ between two uniform spaces is uniformly continuous if for every $V\in\mathbb{V}$ there exists a $U\in\mathbb{U}$ such that for all $x,y\in X$ we have $(f(x),f(y))\in V$ whenever $(x,y)\in U$. A bijective mapping $f:(X,\mathbb{U})\to(Y,\mathbb{V})$ is said to be a uniform isomorphism if both $f$ and $f^{-1}$ are uniformly continuous (see \cite{Engelking}). We say that two uniform spaces $X$ and $Y$ are uniformly isomorphic if there exists a uniform isomorphism of $X$ onto $Y$. It is clear that every uniform isomorphism is an open mapping.
$(X,\mathbb{U})$ is said to be precompact or totally bounded (resp. pre-Lindel\"{o}f) if for each $U\in\mathbb{U}$ there exists a finite (resp. countable) $A\subseteq X$ such that $U[A]=X$ \cite{Engelking,Borubaev,preLindelof}.
Every compact (resp. Lindel\"{o}f) uniform space is precompact (resp. pre-Lindel\"{o}f). Moreover, for a complete uniform space precompactness (resp. pre-Lindel\"{o}fness) and compactness (resp. Lindel\"{o}fness) are equivalent. It is easy to observe that if the uniformity $\mathbb{U}$ on a set $X$ is induced by a metric $d$, then $(X,\mathbb{U})$ is complete (resp. precompact, pre-Lindel\"{o}f) if and only if $(X,d)$ is complete (resp. precompact, pre-Lindel\"{o}f).

We now recall some definitions of topological spaces formulated in terms of classical selection principles from \cite{coc1,coc2}.
A topological space $X$ is said to be Menger (resp. Rothberger) if for each sequence $(\mathcal{U}_n)$ of open covers of $X$ there is a sequence $(\mathcal{V}_n)$ (resp. $(U_n)$) such that for each $n$ $\mathcal{V}_n$ is a finite subset of $\mathcal{U}_n$ (resp. $U_n\in\mathcal{U}_n)$ and $\cup_{n\in\mathbb{N}}\mathcal{V}_n$ (resp. $\{U_n : n\in\mathbb{N}\}$) is an open cover of $X$. A topological space $X$ is said to be Hurewicz if for each sequence $(\mathcal{U}_n)$ of open covers of $X$ there is a sequence $(\mathcal{V}_n)$ such that for each $n$ $\mathcal{V}_n$ is a finite subset of $\mathcal{U}_n$  and each $x\in X$ belongs to $\cup\mathcal{V}_n$ for all but finitely many $n$. A topological space $X$ is said to be locally compact (resp. locally Menger, locally Hurewicz, locally Rothberger, locally Lindel\"{o}f) if for each $x\in X$ there exist an open set $U$ and a compact (resp. Menger, Hurewicz, Rothberger, Lindel\"{o}f) subspace $Y$ of $X$ such that $x\in U\subseteq Y$.

In \cite{SPUS} (see also \cite{SRSP}), Ko\v{c}inac introduced the following uniform selection principles. A uniform space $(X,\mathbb{U})$ is  Menger-bounded (in short, $\Meb$) if for each sequence $(U_n)$ of members of $\mathbb{U}$ there is a sequence $(F_n)$ of finite subsets of $X$ such that $\cup_{n\in\mathbb{N}}U_n[F_n]=X$. A uniform space $(X,\mathbb{U})$ is said to be Hurewicz-bounded (in short, $\Hub$) if for each sequence $(U_n)$ of members of $\mathbb{U}$ there is a sequence $(F_n)$ of finite subsets of $X$ such that each $x\in X$ belongs to $U_n[F_n]$ for all but finitely many $n$.
Also a uniform space $(X,\mathbb{U})$ is said to be Rothberger-bounded (in short, $\Rob$) if for each sequence $(U_n)$ of members of $\mathbb{U}$ there is a sequence $(x_n)$ of members of $X$ such that $\cup_{n\in\mathbb{N}}U_n[x_n]=X$.
The above properties are also known as uniformly Menger, uniformly Hurewicz and uniformly Rothberger respectively.
We also say that a metric space $(X,d)$ is $\Me$-bounded (resp. $\Hu$-bounded, $\Ro$-bounded) if $X$ with the induced uniformity is $\Me$-bounded (resp. $\Hu$-bounded, $\Ro$-bounded).

Throughout the paper $(X,\mathbb{U})$ (or $X$ for short, when $\mathbb U$ is clear from the context) stands for a uniform space, where $\mathbb U$ is a diagonal uniformity on $X$.\\

The following two results are useful in our context.
\begin{Th}[cf. \cite{SPUS}]
Let $(X,\mathbb{U})$ and $(Y,\mathbb{V})$ be two uniform spaces.
\begin{enumerate}[wide=0pt,label={\upshape(\arabic*)},ref={\theTh(\arabic*)},leftmargin=*]
  \item \label{LU501}  $(X\times Y,\mathbb{U}\times\mathbb{V})$ is $\Hu$-bounded if and only if both $(X,\mathbb{U})$ and $(Y,\mathbb{V})$ are $\Hu$-bounded.
  \item 
  If $(X,\mathbb{U})$ is $\Me$-bounded and $(Y,\mathbb{V})$ is precompact, then $(X\times Y,\mathbb{U}\times\mathbb{V})$ is $\Me$-bounded.
\end{enumerate}
\end{Th}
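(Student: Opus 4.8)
The plan is to exploit the asymmetry between the two hypotheses: $\Meb$ness of $X$ produces a cover in which each point is caught at \emph{some} stage $n$, whereas precompactness of $Y$ produces a cover at \emph{every} stage $n$, so the two can always be reconciled at a common index. First I would pass to basic entourages of the product uniformity. Recall that a base for $\mathbb{U}\times\mathbb{V}$ is formed by the sets $W_{U,V}=\{((x,y),(x',y')) : (x,x')\in U,\ (y,y')\in V\}$, for $U\in\mathbb{U}$ and $V\in\mathbb{V}$. Given an arbitrary sequence $(W_n)$ in $\mathbb{U}\times\mathbb{V}$, I would choose for each $n$ some $U_n\in\mathbb{U}$ and $V_n\in\mathbb{V}$ with $W_{U_n,V_n}\subseteq W_n$; since $W\mapsto W[G]$ is monotone, it is enough to find finite sets $G_n$ with $\bigcup_n W_{U_n,V_n}[G_n]=X\times Y$.

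I would then feed the extracted sequences into the hypotheses. As $(X,\mathbb{U})$ is $\Meb$, applying the definition to $(U_n)$ gives finite $F_n\subseteq X$ with $\bigcup_n U_n[F_n]=X$. As $(Y,\mathbb{V})$ is precompact, each individual $V_n$ admits a finite $A_n\subseteq Y$ with $V_n[A_n]=Y$. Setting $G_n=F_n\times A_n$, a finite subset of $X\times Y$, the verification reduces to the entourage identity $W_{U,V}[(a,b)]=U[a]\times V[b]$, which yields $W_{U_n,V_n}[G_n]=U_n[F_n]\times V_n[A_n]=U_n[F_n]\times Y$, the last step because $V_n[A_n]=Y$. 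Taking the union over $n$ gives $\bigcup_n W_{U_n,V_n}[G_n]=\left(\bigcup_n U_n[F_n]\right)\times Y=X\times Y$, and monotonicity upgrades this to $\bigcup_n W_n[G_n]=X\times Y$, so $(X\times Y,\mathbb{U}\times\mathbb{V})$ is $\Meb$.

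The one delicate point, and the place where the precompactness hypothesis is genuinely needed, is the collapse $U_n[F_n]\times V_n[A_n]=U_n[F_n]\times Y$ at \emph{each} stage $n$. The $\Meb$ property of $X$ only guarantees that a given first coordinate is captured by $U_n[F_n]$ for \emph{some} $n$, so one must be able to fill the entire second factor $Y$ at precisely that stage; this is exactly what $V_n[A_n]=Y$ provides. Had $Y$ been assumed merely $\Meb$ rather than precompact, its covering would also hold only at scattered stages and the indices for the two coordinates could not in general be aligned---this is why the hypothesis on $Y$ must be the stronger, ``at every stage'' property of precompactness.
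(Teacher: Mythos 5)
Your argument for part (2) is correct: passing to basic entourages $W_{U_n,V_n}\subseteq W_n$, the identity $W_{U,V}[(a,b)]=U[a]\times V[b]$, and the stage-by-stage collapse $U_n[F_n]\times V_n[A_n]=U_n[F_n]\times Y$ constitute exactly the standard argument (the paper itself states this theorem with a citation to Ko\v{c}inac and supplies no proof, so that standard argument is the only benchmark), and your closing remark correctly isolates why precompactness --- an ``at every stage'' property --- is what lets the indices for the two coordinates be aligned. The genuine gap is that you have proved only half of the statement: the theorem also asserts part (1), the biconditional that $(X\times Y,\mathbb{U}\times\mathbb{V})$ is $\Hu$-bounded if and only if both $(X,\mathbb{U})$ and $(Y,\mathbb{V})$ are, and your proposal never addresses it.

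Part (1) needs its own short argument, and it is instructive that it goes through where the Menger analogue fails. For the forward direction, the projections are uniformly continuous surjections and $\Hub$ness is preserved under uniformly continuous mappings (Theorem~\ref{LU404} of the paper), so both factors inherit the property. For the converse, given $(W_n)$ extract $U_n\in\mathbb{U}$ and $V_n\in\mathbb{V}$ with $W_{U_n,V_n}\subseteq W_n$ as you did; $\Hub$ness of $X$ gives finite $F_n\subseteq X$ such that each $x\in X$ lies in $U_n[F_n]$ for all but finitely many $n$, and $\Hub$ness of $Y$ gives finite $E_n\subseteq Y$ such that each $y\in Y$ lies in $V_n[E_n]$ for all but finitely many $n$; then for any $(x,y)$ both memberships fail only finitely often, so $(x,y)\in U_n[F_n]\times V_n[E_n]=W_{U_n,V_n}[F_n\times E_n]\subseteq W_n[F_n\times E_n]$ for all but finitely many $n$. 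The point is that the Hurewicz-type requirement ``for all but finitely many $n$'' is stable under intersecting two such requirements, whereas the Menger-type requirement ``for some $n$'' is not --- which is precisely the obstruction you identified in your last paragraph, and the reason part (2) must place the stage-uniform hypothesis of precompactness on the second factor.
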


\begin{Th}[cf. \cite{SPUS}]
\hfill
\begin{enumerate}[wide=0pt,label={\upshape(\arabic*)},ref={\theTh(\arabic*)},leftmargin=*]
  \item \label{LU404} $\Meb$, $\Hub$ and $\Rob$ properties are hereditary and preserved under uniformly continuous mappings.
  \item \label{LU401} Let $(X,\mathbb{U})$ be a uniform space and $Y\subseteq X$. If $(Y,\mathbb{U}_Y)$ is $\Hu$-bounded, then $(\overline{Y},\mathbb{U}_{\overline{Y}})$ is also $\Hu$-bounded.
  \item \label{LU402} If $(X,\mathbb{U})$ is a complete uniform space, then $(X,\mathbb{U})$ is Hurewicz, if and only if it is $\Hu$-bounded.

  \item \label{LU403} If $(X,\mathbb{U})$ is  Menger, Hurewicz, Rothberger, then it is also $\Me$-bounded, $\Hu$-bounded and $\Ro$-bounded respectively.
\end{enumerate}
\end{Th}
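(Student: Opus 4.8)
The plan for item (1) is built on the standard ``halving'' device. For heredity of $\Meb$, take a sequence $(V_n)$ in $\mathbb{U}_Y$, write $V_n=(Y\times Y)\cap U_n$ with $U_n\in\mathbb{U}$, choose symmetric $W_n\in\mathbb{U}$ with $W_n\circ W_n\subseteq U_n$, and apply $\Meb$ of $X$ to $(W_n)$ to get finite $F_n\subseteq X$ with $\cup_n W_n[F_n]=X$. For each $u\in F_n$ with $W_n[u]\cap Y\neq\emptyset$ select a witness $z_{n,u}\in W_n[u]\cap Y$ and set $G_n=\{z_{n,u}\}$, a finite subset of $Y$. If $y\in W_n[u]\cap Y$ then $(z_{n,u},y)\in W_n\circ W_n\subseteq U_n$ by symmetry, so $y\in V_n[G_n]$; hence $Y=\cup_n V_n[G_n]$. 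The identical construction, with single points in place of finite sets or with the ``all but finitely many'' quantifier retained, settles $\Rob$ and $\Hub$. For preservation under a uniformly continuous $f:X\to Y$, given $(V_n)$ in $\mathbb{V}$ pull each back to $U_n\in\mathbb{U}$ with $(x,x')\in U_n\Rightarrow(f(x),f(x'))\in V_n$, apply the boundedness of $X$, and push the witnessing finite sets forward by $f$; uniform continuity gives $f(U_n[F_n])\subseteq V_n[f(F_n)]$, so the images cover the image of $X$ (eventually, in the Hurewicz case).

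For item (2) it suffices to prove that a dense $\Hub$ subspace forces the ambient space to be $\Hub$, applied with the ambient space equal to $\overline{Y}$. Given $(U_n)$ in $\mathbb{U}_{\overline{Y}}$, pick symmetric $V_n$ with $V_n\circ V_n\subseteq U_n$, restrict to $Y$, and apply $\Hub$ of $Y$ to obtain finite $F_n\subseteq Y$ with each $y\in Y$ in $V_n[F_n]$ for all but finitely many $n$. The propagation mechanism to a closure point $z\in\overline{Y}$ is the inclusion $\overline{V_n[u]}\subseteq (V_n\circ V_n)[u]\subseteq U_n[u]$, valid because $\overline{A}\subseteq V_n[A]$ for the symmetric entourage $V_n$. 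I expect the main obstacle here---indeed the delicate point of the whole theorem---to be that the ``all but finitely many'' quantifier is attached to each \emph{fixed} point of $Y$, whereas the point of $Y$ approximating $z$ must be chosen afresh for each $n$ as $V_n$ shrinks. Transferring eventual coverage from these moving approximants to the single point $z$ is exactly where care is required, and I anticipate needing either a monotonicity/absorption property forced on the finite sets $F_n$ or a diagonal re-indexing to make the argument go through.

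Item (3) splits along the equivalence. The forward direction, Hurewicz $\Rightarrow\Hub$, is free: it is the Hurewicz instance of item (4) and uses no completeness. For the converse I would first extract pre-Lindel\"{o}fness by applying $\Hub$ to a constant sequence $U_n\equiv U$, whereupon completeness upgrades pre-Lindel\"{o}fness to Lindel\"{o}fness as recorded in the preliminaries. Then, given a sequence of open covers $(\mathcal{U}_n)$, the plan is to associate to each $\mathcal{U}_n$ an entourage $U_n\in\mathbb{U}$ whose balls $\{U_n[x]:x\in X\}$ refine $\mathcal{U}_n$, feed $(U_n)$ into $\Hub$, and translate the resulting finite sets $F_n$ back into finite subfamilies $\mathcal{V}_n\subseteq\mathcal{U}_n$ covering the required points eventually. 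The crux is manufacturing such a Lebesgue-type entourage for an arbitrary open cover, which is precisely the step where completeness (through Lindel\"{o}fness and the attendant uniform refinements) is indispensable; note that completeness cannot be dropped, since $\Hub$ alone does not even yield precompactness.

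Item (4) is the most direct. For each $n$ the family $\{\Int U_n[x]:x\in X\}$ is an open cover of $X$, because $U_n[x]$ is a $\tau_{\mathbb{U}}$-neighbourhood of $x$ (pick $V$ with $V\circ V\subseteq U_n$, so $V[x]\subseteq\Int U_n[x]$). Applying the Menger (resp. Rothberger, Hurewicz) selection to the sequence of these open covers yields finite subfamilies indexed by finite $F_n\subseteq X$ (resp. single points $x_n$), and the covering conclusion ``$x\in\Int U_n[z]\subseteq U_n[z]$ for some selected $z$'' is literally $\cup_n U_n[F_n]=X$ (resp. $\cup_n U_n[x_n]=X$, resp. eventual membership in $U_n[F_n]$). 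Thus $X$ is $\Meb$ (resp. $\Rob$, $\Hub$), with no genuine obstacle beyond this routine translation between open covers and entourage balls.
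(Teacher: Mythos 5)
First, a framing remark: the paper never proves this theorem --- it is quoted in the preliminaries from Ko\v{c}inac \cite{SPUS} (hence the ``cf.'') and later results simply invoke it --- so your proposal can only be judged on its own merits. On those merits, items (1) and (4) are correct and essentially optimal: the halving device $W_n\circ W_n\subseteq U_n$ with witnesses $z_{n,u}\in W_n[u]\cap Y$ is exactly right for heredity (symmetry of entourages, built into this paper's definitions, gives $(z_{n,u},y)\in W_n\circ W_n\subseteq U_n$, hence $y\in V_n[G_n]$), pulling entourages back along a uniformly continuous map and pushing the finite selections forward settles preservation, and the translation in (4) between the open covers $\{\Int U_n[x]:x\in X\}$ and entourage balls is the standard correct argument.

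Item (2) is where the proposal stops being a proof: you describe the naive halving scheme, correctly diagnose the obstruction (the ``all but finitely many'' quantifier attaches to a fixed point of $Y$, while the point of $Y$ approximating $z$ must move with $n$), and then defer to an unspecified ``monotonicity or diagonal re-indexing''. That gap cannot be closed, because statement \ref{LU401} as transcribed is false. Every countable uniform space is $\Hub$: enumerate $Y=\{y_1,y_2,\dots\}$ and put $F_n=\{y_1,\dots,y_n\}$, so that $y_k\in F_n\subseteq U_n[F_n]$ for all $n\geq k$. Now take $X=\mathbb{N}^\mathbb{N}$ with the Baire uniformity $\mathbb{B}$ and let $Y$ be the countable dense set of eventually constant sequences. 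Then $(Y,\mathbb{B}_Y)$ is $\Hub$, while $\overline{Y}=\mathbb{N}^\mathbb{N}$ is not $\Hub$, indeed not even $\Meb$: by the paper's own Theorems \ref{TU8} and \ref{TU9}, a $\Meb$ (resp.\ $\Hub$) space has non-dominating (resp.\ bounded) uniformly continuous images in $\mathbb{N}^\mathbb{N}$, and the identity image of $\mathbb{N}^\mathbb{N}$ is dominating and unbounded. (Contrast this with precompactness or pre-Lindel\"{o}fness, where a single entourage is involved and the halving argument does pass to closures via $\overline{V[F]}\subseteq(V\circ V)[F]$.) So your instinct that this is the delicate point was right, but the obstruction is fatal rather than technical; whatever hypotheses were intended in \cite{SPUS}, no argument can prove the statement as it stands here.

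Item (3): the forward implication is, as you say, the Hurewicz case of (4). The converse, however, rests on a false step: in a uniform space there is in general no ``Lebesgue-type'' entourage for an arbitrary open cover, and completeness plus Lindel\"{o}fness do not produce one for the \emph{given} uniformity $\mathbb{U}$. Already $\mathbb{R}$ with its standard (complete!) uniformity defeats the step: no family $\{U_\varepsilon[x]:x\in\mathbb{R}\}$ refines the open cover $\{\mathbb{R}\setminus\mathbb{Z}\}\cup\{(n-2^{-|n|},n+2^{-|n|}):n\in\mathbb{Z}\}$, since the ball centred at an integer $n$ with $2^{-|n|}<\varepsilon$ lies in no member. (The property you want characterizes the fine uniformity of a paracompact space, and $\Hub$ with respect to $\mathbb{U}$ does not transfer to finer uniformities.) The workable route runs in the opposite direction, at least when $\mathbb{U}$ has a countable base: feed into $\Hub$ a sequence of entourages in which each basic entourage recurs infinitely often, so that the pieces $\cap_{n\geq m}U_n[F_n]$ are precompact; completeness makes their closures compact, hence $X$ is $\sigma$-compact, and $\sigma$-compact spaces are Hurewicz. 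This is precisely where the Hurewicz quantifier (as opposed to Menger's) is needed, and the cover-to-entourage translation is performed only on compact pieces, where it is legitimate --- not via Lindel\"{o}fness, as your sketch asserts.
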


\section{Few observations on uniform selection principles}

We present a few more observations on uniform selection principles that will be useful subsequently. Throughout the paper we use the symbol $\U$ to denote  any of the {Menger}, {Hurewicz} or {Rothberger} properties. Accordingly $\Ub$ denotes any of the $\Meb$, $\Hub$ or $\Rob$ properties.

We start with two basic observations (without proof) about uniform selection principles.
\begin{Lemma}
Let $(X,\mathbb{U})$ be a uniform space. A subspace $Y$ of $X$ is
\begin{enumerate}[wide=0pt,label={\upshape(\arabic*)},
ref={\theLemma(\arabic*)},leftmargin=*]
  \item 
  $\Me$-bounded  if and only if for each sequence $(U_n)$ of members of $\mathbb{U}$ there exists a sequence $(F_n)$ of finite subsets of $Y$ such that $Y\subseteq\cup_{n\in\mathbb{N}}U_n[F_n]$.
  \item \label{LU603} $\Hu$-bounded if and only if for each sequence $(U_n)$ of members of $\mathbb{U}$ there exists a sequence $(F_n)$ of finite subsets of $Y$ such that each $y\in Y$ belongs to $U_n[F_n]$ for all but finitely many $n$.
  \item 
  $\Ro$-bounded if and only if for each sequence $(U_n)$ of members of $\mathbb{U}$ there exists a sequence $(x_n)$ of members of $Y$ such that $Y\subseteq\cup_{n\in\mathbb{N}}U_n[x_n]$.
\end{enumerate}
\end{Lemma}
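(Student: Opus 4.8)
The plan is to prove each of the three biconditionals by relating the intrinsic selection property of the subspace $(Y,\mathbb{U}_Y)$ to the selection data realized inside the ambient space $X$. The key observation throughout is the compatibility of the relative uniformity with the entourage sets: for $U\in\mathbb{U}$ and the corresponding $U_Y=(Y\times Y)\cap U\in\mathbb{U}_Y$, and for any $y\in Y$, we have $U_Y[y]=U[y]\cap Y$. Consequently, for any finite (or singleton) $F\subseteq Y$, the set $U_Y[F]$ equals $U[F]\cap Y$. This dictionary is what converts statements about covering $Y$ by $\mathbb{U}_Y$-entourages into statements about covering $Y$ by $\mathbb{U}$-entourages, and it is the same computation in all three parts, differing only in the combinatorial bookkeeping (finite subsets with union, finite subsets with eventual membership, singletons with union).

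I will carry out part \ref{LU603} in detail and remark that the other two are identical mutatis mutandis. For the forward direction, suppose $(Y,\mathbb{U}_Y)$ is $\Hu$-bounded and let $(U_n)$ be a sequence in $\mathbb{U}$. Setting $V_n=(Y\times Y)\cap U_n\in\mathbb{U}_Y$, the definition of $\Hu$-boundedness applied in $(Y,\mathbb{U}_Y)$ yields finite sets $F_n\subseteq Y$ with each $y\in Y$ lying in $V_n[F_n]$ for all but finitely many $n$. By the identity above, $V_n[F_n]=U_n[F_n]\cap Y\subseteq U_n[F_n]$, so each $y\in Y$ lies in $U_n[F_n]$ for all but finitely many $n$, which is exactly the right-hand condition. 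For the converse, assume the right-hand condition and let $(V_n)$ be a sequence in $\mathbb{U}_Y$; by definition of the relative uniformity each $V_n=(Y\times Y)\cap U_n$ for some $U_n\in\mathbb{U}$. Apply the hypothesis to $(U_n)$ to obtain finite $F_n\subseteq Y$ witnessing eventual membership in $U_n[F_n]$; intersecting with $Y$ and using $y\in Y$ together with the identity gives eventual membership in $V_n[F_n]$, so $(Y,\mathbb{U}_Y)$ is $\Hu$-bounded.

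There is essentially no hard step here; the proof is a direct unwinding of definitions, and this is presumably why the authors state the lemma without proof. The only mild subtlety worth flagging is in the converse direction: one must be careful that the $F_n$ produced are subsets of $Y$ (not merely of $X$), which is guaranteed because the right-hand conditions as stated already quantify over finite subsets $(F_n)$ of $Y$ and sequences $(x_n)$ from $Y$. Thus the role of the lemma is precisely to record that restricting attention to entourages of the form $(Y\times Y)\cap U$ loses nothing, so the subspace property can be tested using ambient entourages while keeping the witnessing points inside $Y$.
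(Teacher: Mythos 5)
Your proof is correct, and it is exactly the routine argument the paper has in mind: the authors explicitly state this lemma ``without proof'' as a basic observation, and your unwinding of the definitions via the identity $\bigl((Y\times Y)\cap U\bigr)[y]=U[y]\cap Y$ for $y\in Y$ (hence $V_n[F_n]=U_n[F_n]\cap Y$ for $F_n\subseteq Y$) is precisely what makes it routine. You also correctly flag the one point that matters, namely that the witnessing sets $F_n$ (or points $x_n$) must be taken inside $Y$ in both directions, so nothing further is needed.
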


\begin{Lemma}
\label{TU2}
Let $\mathbb{U}$ and $\mathbb{V}$ be two uniformities on a set $X$ such that $\mathbb{V}$ is finer than $\mathbb{U}$. If $(X,\mathbb{V})$ is $\Ub$, then $(X,\mathbb{U})$ is also $\Ub$.
\end{Lemma}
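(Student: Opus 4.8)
The plan is to unwind the definition of ``finer'' and to observe that the witnessing condition for $\Ub$ depends only on the entourages in the given sequence, not on which uniformity they are drawn from. Recall that $\mathbb{V}$ being finer than $\mathbb{U}$ means precisely $\mathbb{U}\subseteq\mathbb{V}$, so that every entourage of $\mathbb{U}$ is also an entourage of $\mathbb{V}$.

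First I would fix an arbitrary sequence $(U_n)$ of members of $\mathbb{U}$. By the inclusion $\mathbb{U}\subseteq\mathbb{V}$, each $U_n$ lies in $\mathbb{V}$, and hence $(U_n)$ is equally a sequence of members of $\mathbb{V}$. Now I would feed this sequence into the hypothesis that $(X,\mathbb{V})$ is $\Ub$.

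The $\Ub$ property then returns a witnessing sequence: a sequence $(F_n)$ of finite subsets of $X$ with $\cup_{n\in\bn}U_n[F_n]=X$ in the $\Meb$ case; a sequence $(F_n)$ of finite subsets such that each $x\in X$ lies in $U_n[F_n]$ for all but finitely many $n$ in the $\Hub$ case; or a sequence $(x_n)$ of points with $\cup_{n\in\bn}U_n[x_n]=X$ in the $\Rob$ case. In every instance the defining condition is phrased solely in terms of the sets $U_n[F_n]$ (respectively $U_n[x_n]$), which are determined by the entourages $U_n$ alone and make no further reference to $\mathbb{V}$. Hence the very same witnessing sequence certifies that $(X,\mathbb{U})$ is $\Ub$, and since $(U_n)$ was an arbitrary sequence from $\mathbb{U}$, the space $(X,\mathbb{U})$ is $\Ub$.

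I do not anticipate a genuine obstacle; the argument is a direct transfer, and the three cases $\Meb$, $\Hub$, $\Rob$ run in parallel. The only point demanding care is the orientation of the word ``finer'': one must use $\mathbb{U}\subseteq\mathbb{V}$, so that a sequence drawn from the coarser uniformity $\mathbb{U}$ is admissible as input to the selection scheme available for the finer uniformity $\mathbb{V}$, rather than the reverse inclusion.
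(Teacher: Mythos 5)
Your argument is correct: since $\mathbb{V}$ finer than $\mathbb{U}$ means $\mathbb{U}\subseteq\mathbb{V}$, any sequence of entourages from $\mathbb{U}$ is admissible input for the $\Ub$ property of $(X,\mathbb{V})$, and the resulting witnesses certify the property for $(X,\mathbb{U})$ because the covering condition depends only on the sets $U_n[F_n]$ (or $U_n[x_n]$). The paper states this lemma as a basic observation without proof, and your direct transfer argument is exactly the intended one.
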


\begin{Th}
\label{TU4}
Let $(X,\mathbb{U})$ be a uniform space with $X=\cup_{n\in\mathbb{N}}X_n$. Then $X$ is $\Ub$ if and only if  $X_n$ is $\Ub$ for each $n$.
\end{Th}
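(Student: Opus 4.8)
The plan is to treat the two implications separately, relying throughout on the subspace characterizations recorded in the preceding Lemma. The forward implication is immediate: if $X$ is $\Ub$, then since each $X_n$ is a subspace of $X$ and the $\Ub$ properties are hereditary by \ref{LU404}, each $X_n$ is $\Ub$. No work is needed here, so the content lies entirely in the converse.

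For the converse, assume each $X_n$ is $\Ub$ and fix a sequence $(U_m)_{m\in\mathbb{N}}$ of members of $\mathbb{U}$. In the Menger and Rothberger cases I would first fix a partition $\mathbb{N}=\bigcup_{n\in\mathbb{N}}A_n$ into pairwise disjoint infinite sets. Applying the $\Me$-boundedness (resp.\ $\Ro$-boundedness) of $X_n$ to the subsequence $(U_m)_{m\in A_n}$ produces, by the subspace characterization, finite sets $F_m\subseteq X_n$ (resp.\ points $x_m\in X_n$) for $m\in A_n$ satisfying $X_n\subseteq\bigcup_{m\in A_n}U_m[F_m]$ (resp.\ $X_n\subseteq\bigcup_{m\in A_n}U_m[x_m]$). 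Because the $A_n$ exhaust $\mathbb{N}$ disjointly, these choices combine into a single sequence indexed by $\mathbb{N}$, and taking the union over $n$ gives $X=\bigcup_n X_n\subseteq\bigcup_{m\in\mathbb{N}}U_m[F_m]$ (resp.\ $\bigcup_{m\in\mathbb{N}}U_m[x_m]$), which is exactly what is required.

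I expect the Hurewicz case to be the main obstacle, because the partition trick fails there: a point belonging only to $X_{n_0}$ would be controlled solely at the indices $m\in A_{n_0}$, leaving infinitely many indices unaccounted for, so the ``all but finitely many'' condition cannot be guaranteed. Instead I would apply the $\Hu$-boundedness of each $X_n$ (via \ref{LU603}) to the \emph{entire} sequence $(U_m)_{m\in\mathbb{N}}$, obtaining for every $n$ a sequence $(F_m^n)_{m\in\mathbb{N}}$ of finite subsets of $X_n$ such that each $x\in X_n$ lies in $U_m[F_m^n]$ for all but finitely many $m$. Setting $F_m=\bigcup_{n\le m}F_m^n$ (a finite set) should then work: given $x\in X$, choose $n_0$ with $x\in X_{n_0}$ and $M$ with $x\in U_m[F_m^{n_0}]$ for all $m\ge M$; since $F_m^{n_0}\subseteq F_m$ whenever $m\ge n_0$, it follows that $x\in U_m[F_m]$ for all $m\ge\max(M,n_0)$, so $x$ belongs to $U_m[F_m]$ for all but finitely many $m$. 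This establishes the Hurewicz case and completes the converse.
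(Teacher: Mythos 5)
Your proposal is correct and, in the Hurewicz case---the only one the paper actually writes out---it is essentially the paper's own argument: apply the $\Hu$-bounded property of each $X_n$ separately and take the diagonal union $F_m=\bigcup_{n\le m}F_m^{n}$ (the paper applies Lemma~\ref{LU603} to the tails $(U_m)_{m\ge n}$ rather than to the full sequence, an immaterial difference). Your partition argument for the $\Me$-bounded and $\Ro$-bounded cases is also sound and simply fills in, explicitly, the cases the paper dismisses with ``we only prove sufficiency for the case of $\Hu$-bounded.''
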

\begin{proof}
We only prove sufficiency for the case of $\Hu$-bounded.
Let $(U_n)$ be a sequence of members of $\mathbb{U}$. By Lemma~\ref{LU603}, for each $k\in\mathbb{N}$ we can choose a sequence $(F_n^{(k)}:{n\geq k})$ of finite subsets of $X_k$ such that each $x\in X_k$ belongs to $U_n[F_n^{(k)}]$ for all but finitely many $n\geq k$. For each $n$ let $F_n=\cup_{k\leq n}F_n^{(k)}$. Then $(F_n)$ is a sequence of finite subsets of $X$. We show that each $x\in X$ belongs to $U_n[F_n]$ for all but finitely many $n$. Let $x\in X$. Choose $k_0\in\mathbb{N}$ such that $x\in X_{k_0}$. Clearly, $x\in U_n[F_n^{(k_0)}]$ for all but finitely many $n\geq k_0$ and hence $x\in U_n[F_n]$ for all but finitely many $n$ since $F_n^{(k_0)}\subseteq F_n$ for all $n\geq k_0$. Hence the result.
\end{proof}

Let $(Y,\mathbb{V})$ be a uniform space and $X$ be a set. If $f:X\to Y$ is an injective mapping, then there is a natural uniformity on $X$ induced by $f$ and denoted by $f^{-1}(\mathbb{V})$. This uniformity is generated by the base $\{g^{-1}(V) : V\in\mathbb{V}\}$, where $g:X\times X\to Y\times Y$ is defined by $g(x,y)=(f(x),f(y))$; that is $g=f\times f$. 

\begin{Th}
\label{L4}
 Let $f:X\to Y$ be an injective mapping from a set $X$ onto a uniform space $(Y,\mathbb{V})$. If $Y$ is $\Ub$, then $X$ is also $\Ub$.
\end{Th}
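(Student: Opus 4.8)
The plan is to observe that, once $X$ is equipped with the induced uniformity $f^{-1}(\mathbb{V})$ described just above, the map $f$ is in fact a \emph{uniform isomorphism} of $(X,f^{-1}(\mathbb{V}))$ onto $(Y,\mathbb{V})$, so that the conclusion follows at once from \ref{LU404}. Since $f$ is injective and onto, it is a bijection, and $f^{-1}:Y\to X$ is a surjection onto $X$. I would first verify that $f^{-1}$ is uniformly continuous. Given a basic entourage $g^{-1}(V)\in f^{-1}(\mathbb{V})$ (with $V\in\mathbb{V}$ and $g=f\times f$), one needs $W\in\mathbb{V}$ such that $(y_1,y_2)\in W$ implies $(f^{-1}(y_1),f^{-1}(y_2))\in g^{-1}(V)$. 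Unwinding the definition, $(f^{-1}(y_1),f^{-1}(y_2))\in g^{-1}(V)$ says $(f(f^{-1}(y_1)),f(f^{-1}(y_2)))\in V$, which—because $f\circ f^{-1}=\mathrm{id}_Y$ by surjectivity—reduces to $(y_1,y_2)\in V$; hence $W=V$ works. Thus $f^{-1}$ is a uniformly continuous surjection of the $\Ub$ space $Y$ onto $X$, and \ref{LU404} yields that $X$ is $\Ub$.

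The single point requiring care is precisely the identity $f\circ f^{-1}=\mathrm{id}_Y$ invoked above: without surjectivity one could only assert $(y_1,y_2)\in V\cap(f(X)\times f(X))$, and the reduction would stall. This is the heart of why the hypothesis is stated as ``injective $\dots$ onto'', and everything else is routine. For completeness one may also note that $f$ itself is uniformly continuous—immediate, since $g^{-1}(V)$ is by construction an entourage of $X$ for each $V\in\mathbb{V}$—so that $f$ is genuinely a uniform isomorphism, although only uniform continuity of $f^{-1}$ is needed for the stated direction.

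Should one prefer to argue directly from the definitions, I would run the three cases in parallel, illustrating with $\Hu$-bounded. Starting from a sequence $(U_n)$ in $f^{-1}(\mathbb{V})$, pick $V_n\in\mathbb{V}$ with $g^{-1}(V_n)\subseteq U_n$, and apply the $\Hub$ property of $Y$ to $(V_n)$ to obtain finite sets $G_n\subseteq Y$ witnessing the Hurewicz condition. Put $F_n=f^{-1}(G_n)$, finite subsets of $X$. For any $x\in X$ one has $f(x)\in V_n[G_n]$ for all but finitely many $n$; pulling the witnessing point of $G_n$ back through $f$ and using $g^{-1}(V_n)\subseteq U_n$ places $x\in U_n[F_n]$ for all but finitely many $n$, where surjectivity of $f$ is again what allows each point of $G_n$ to be written as $f$ of a point of $F_n$. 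The Menger and Rothberger cases are verbatim, with ``finite subsets / all but finitely many'' replaced by ``finite subsets / union'' and ``singletons / union'' respectively.
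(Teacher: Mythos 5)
Your proposal is correct, and its primary argument takes a genuinely different route from the paper. The paper proves the theorem by hand (for the $\Hu$-bounded case): it pulls a sequence $(U_n)$ of entourages of $f^{-1}(\mathbb{V})$ back to entourages $V_n\in\mathbb{V}$ with $g^{-1}(V_n)\subseteq U_n$, applies the $\Hub$ property of $Y$ to get finite sets $F_n\subseteq Y$, lifts them pointwise through the bijection to finite sets $F_n'\subseteq X$, and verifies the Hurewicz condition directly --- exactly the computation you sketch in your final paragraph, so your ``alternative'' argument is essentially the paper's proof. Your main argument instead observes that $f^{-1}\colon (Y,\mathbb{V})\to (X,f^{-1}(\mathbb{V}))$ is a uniformly continuous surjection (indeed $f$ is a uniform isomorphism) and invokes Theorem~\ref{LU404} (preservation of $\Ub$ under uniformly continuous mappings). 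This is sound: the verification that $W=V$ witnesses uniform continuity of $f^{-1}$ on the basic entourages $g^{-1}(V)$ is correct, and checking on a base suffices since every member of $f^{-1}(\mathbb{V})$ contains a basic one (a half-sentence worth adding). What your route buys is economy and uniformity: one short argument disposes of all three properties $\Meb$, $\Hub$, $\Rob$ at once by delegating the combinatorics to the already-available preservation theorem, whereas the paper's direct proof must be run (or at least asserted) separately for each property. What the paper's route buys is self-containedness --- it exposes exactly how the finite witnessing sets transfer, which is the content hidden inside Theorem~\ref{LU404}. One pedantic remark: the deeper role of injectivity is not the identity $f\circ f^{-1}=\mathrm{id}_Y$ you highlight, but the fact (settled in the paper's setup preceding the theorem) that $f^{-1}(\mathbb{V})$ is a uniformity at all, since $\bigcap_{V\in\mathbb{V}} g^{-1}(V)=g^{-1}(\Delta_Y)$ equals $\Delta_X$ only when $f$ is injective.
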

\begin{proof}
We give a proof for the case of $\Hu$-bounded.
Suppose that $Y$ is $\Hu$-bounded.
Let $(U_n)$ be a sequence of members of $f^{-1}(\mathbb{V})$. By definition, $\{g^{-1}(V) : V\in\mathbb{V}\}$ is a base for the uniformity $f^{-1}(\mathbb{V})$, where $g=f\times f$. For each $n$ choose $V_n\in\mathbb{V}$ such that $g^{-1}(V_n)\subseteq U_n$.
Next choose a sequence $(F_n)$ of finite subsets of $Y$ such that for each $y\in Y$ there exists a $n_y\in\mathbb{N}$ such that $y\in V_n[F_n]$ for all $n\geq n_y$. For each $n$ set $F_n=\{y_1^{(n)},y_2^{(n)},\cdots,y_{k_n}^{(n)}\}$ and $F_n^\prime=\{x_1^{(n)},x_2^{(n)},\cdots,x_{k_n}^{(n)}\}
\subseteq X$, where $f(x_i^{(n)})=y_i^{(n)}$ for each $1\leq i\leq k_n$. 
Let $x\in X$. Then there exists a $n_{f(x)}\in\mathbb{N}$ such that $f(x)\in V_n[F_n]$ for all $n\geq n_{f(x)}$. For each $n\geq n_{f(x)}$ there exists a $i_n$ with $1\leq i_n\leq k_n$ such that $f(x)\in V_n[y_{i_n}^{(n)}]$. Thus, for each $n\geq n_{f(x)}$ we have $g(x,x_{i_n}^{(n)})\in V_n$; that is $x\in U_n[x_{i_n}^{(n)}]\subseteq U_n[F_n^\prime]$ for all but finitely many $n$. This completes the proof.
\end{proof}

 The eventual dominance relation $\leq^*$ on the Baire space $\mathbb{N}^\mathbb{N}$ is defined by $f\leq^*g$ if and only if $f(n)\leq g(n)$ for all but finitely many $n$. A subset $A$ of $\mathbb{N}^\mathbb{N}$ is said to be dominating if for each $g\in\mathbb{N}^\mathbb{N}$ there exists a $f\in A$ such that $g\leq^* f$. A subset $A$ of $\mathbb{N}^\mathbb{N}$ is said to be bounded if there is a $g\in\mathbb{N}^\mathbb{N}$ such that $f\leq^*g$ for all $f\in A$. Moreover a set $A\subseteq\mathbb{N}^\mathbb{N}$ is said to be guessed by $g\in\mathbb{N}^\mathbb{N}$ if $\{n\in\mathbb{N} : f(n)=g(n)\}$ is infinite for all $f\in A$. The minimum cardinality of a dominating subset of $\mathbb{N}^\mathbb{N}$ is denoted by $\mathfrak{d}$, and the minimum cardinality of a unbounded subset of $\mathbb{N}^\mathbb{N}$ is denoted by $\mathfrak{b}$. Let $\cov(\mathcal{M})$ be the minimum cardinality of a family of meager subsets of $\mathbb{R}$ that covers $\mathbb{R}$. In \cite{CAMC} (see also \cite[Theorem 2.4.1]{TBHJ}), $\cov(\mathcal{M})$ is described as the minimum cardinality of a subset $F\subseteq\mathbb{N}^\mathbb{N}$ such that for every $g\in\mathbb{N}^\mathbb{N}$ there is $f\in F$ such that $f(n)\neq g(n)$ for all but finitely many $n$. Thus, we can say that if $F\subseteq\mathbb{N}^\mathbb{N}$ and $|F|<\cov(\mathcal{M})$, then $F$ can be guessed by a $g\in\mathbb{N}^\mathbb{N}$. It is to be noted that the Baire space is also a uniform space with the uniformity $\mathbb B$ induced by the Baire metric.

\begin{Th}
\label{TN01}
Every pre-Lindel\"{o}f space $(X,\mathbb{U})$ with $|X|<\mathfrak d$  is $\Meb$.
\end{Th}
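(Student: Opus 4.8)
The plan is to link the pre-Lindel\"{o}f property to the combinatorial description of $\mathfrak d$ by attaching to each point of $X$ a function in $\mathbb N^{\mathbb N}$ and then exploiting the hypothesis $|X|<\mathfrak d$ to produce the required finite sets from a single auxiliary function.

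First I would fix an arbitrary sequence $(U_n)$ of members of $\mathbb U$. Using pre-Lindel\"{o}fness, for each $n$ I choose a countable set $A_n=\{a^{(n)}_k : k\in\mathbb N\}\subseteq X$ with $U_n[A_n]=X$. Then for every $x\in X$ and every $n$ there is at least one $k$ with $x\in U_n[a^{(n)}_k]$, so I may define $f_x\in\mathbb N^{\mathbb N}$ by $f_x(n)=\min\{k : x\in U_n[a^{(n)}_k]\}$. This assignment converts the covering data at $x$ into a single function, and is the heart of the construction.

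Next I would observe that the family $\{f_x : x\in X\}$ has cardinality at most $|X|<\mathfrak d$, hence it cannot be dominating in $(\mathbb N^{\mathbb N},\leq^*)$. Consequently there exists $g\in\mathbb N^{\mathbb N}$ that is not eventually dominated by any $f_x$; that is, $g\not\leq^* f_x$ for every $x$, which unwinds to: for each $x\in X$ the inequality $f_x(n)<g(n)$ holds for infinitely many $n$. Finally, setting $F_n=\{a^{(n)}_k : k<g(n)\}$ yields a sequence of finite subsets of $X$, and for any $x\in X$, whenever $f_x(n)<g(n)$ one has $a^{(n)}_{f_x(n)}\in F_n$ together with $x\in U_n[a^{(n)}_{f_x(n)}]$, so $x\in U_n[F_n]$. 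Since this occurs for infinitely many $n$, in particular $x\in\bigcup_{n\in\mathbb N}U_n[F_n]$, whence $\bigcup_{n\in\mathbb N}U_n[F_n]=X$ and $X$ is $\Meb$.

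The only step requiring care is the translation in the third paragraph: one must phrase the failure of $\{f_x : x\in X\}$ to be dominating precisely as ``$g\not\leq^* f_x$ for all $x$'', and then read this correctly as ``$f_x(n)<g(n)$ for infinitely many $n$'' rather than the weaker ``for some $n$''. Once this combinatorial dictionary between pre-Lindel\"{o}fness, the functions $f_x$, and $\mathfrak d$ is set up, the remaining verification that the sets $F_n$ cover $X$ is entirely routine.
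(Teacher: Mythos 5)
Your proof is correct and follows essentially the same route as the paper's own argument: the same coding of points via $f_x(n)=\min\{k : x\in U_n[a^{(n)}_k]\}$, the same appeal to $|X|<\mathfrak d$ to get a witness $g$ with $g\not\leq^* f_x$ for all $x$, and the same finite sets $F_n$ of initial segments of $A_n$. Your careful unwinding of non-domination as ``$f_x(n)<g(n)$ for infinitely many $n$'' is in fact slightly stronger than what is needed (and than what the paper states), since a single such $n$ per point $x$ already yields the cover.
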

\begin{proof}
Let $(U_n)$ be a sequence of members of $\mathbb{U}$. Apply the pre-Lindel\"{o}f property to obtain a sequence $(A_n)$ of countable subsets of $X$ such that $U_n[A_n]=X$ for each $n$. Say $A_n=\{x_m^{(n)} : m\in\mathbb{N}\}$ for each $n$. Now for each $x\in X$ define  $f_x\in\mathbb{N}^\mathbb{N}$ by $f_x(n)=\min\{m\in\mathbb{N} : x\in U_n[x_m^{(n)}]\}$, $n\in\mathbb{N}$. Since the cardinality of $\{f_x : x\in X\}$ is less than $\mathfrak{d}$, there is a $g\in\mathbb{N}^\mathbb{N}$ and for $x\in X$ a $n_x\in\mathbb{N}$ such that $f_x(n_x)<g(n_x)$. For each $n$ define $F_n=\{x_m^{(n)} : m\leq g(n)\}$. Observe that if $x\in X$, then $x\in U_{n_x}[F_{n_x}]$. Clearly, $\{U_n[F_n] : n\in\mathbb{N}\}$ covers $X$ and hence $X$ is $\Meb$.
\end{proof}

Similarly we obtain the following.
\begin{Th}
\label{TN02}
Every pre-Lindel\"{o}f space $(X,\mathbb{U})$ with $|X|<\mathfrak{b}$ is $\Hub$.
\end{Th}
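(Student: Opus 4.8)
The plan is to follow the proof of Theorem~\ref{TN01} almost line for line, trading the dominating-number characterization of $\mathfrak d$ for the bounding-number characterization of $\mathfrak b$. The single structural change is in how the auxiliary function $g\in\mathbb N^{\mathbb N}$ is extracted and in the resulting mode of covering: boundedness of a small family will yield eventual membership in $U_n[F_n]$ for all but finitely many $n$ (the $\Hub$ condition), in place of membership at a single index $n_x$ (the $\Meb$ condition).

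First I would fix a sequence $(U_n)$ of members of $\mathbb U$ and apply pre-Lindel\"{o}fness to produce, for each $n$, a countable set $A_n=\{x_m^{(n)}:m\in\mathbb N\}$ with $U_n[A_n]=X$. For each $x\in X$ I would then define $f_x\in\mathbb N^{\mathbb N}$ by $f_x(n)=\min\{m\in\mathbb N:x\in U_n[x_m^{(n)}]\}$; this is well defined precisely because $U_n[A_n]=X$. All of this is identical to the $\Meb$ argument.

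The decisive step exploits $|X|<\mathfrak b$. Since $|\{f_x:x\in X\}|\le|X|<\mathfrak b$, this family is bounded, so there exists $g\in\mathbb N^{\mathbb N}$ with $f_x\le^* g$ for every $x\in X$; that is, for each $x$ one has $f_x(n)\le g(n)$ for all but finitely many $n$. Putting $F_n=\{x_m^{(n)}:m\le g(n)\}$ renders each $F_n$ finite, and for any fixed $x$, whenever $f_x(n)\le g(n)$ we have $x_{f_x(n)}^{(n)}\in F_n$ while $x\in U_n[x_{f_x(n)}^{(n)}]$ by the choice of $f_x$, hence $x\in U_n[F_n]$. Consequently $x\in U_n[F_n]$ holds for all but finitely many $n$, which is exactly the $\Hub$ requirement.

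I do not anticipate a genuine obstacle: the upgrade from \emph{some} $n$ (Menger, where $|X|<\mathfrak d$ only forces $\{f_x\}$ to be non-dominating) to \emph{all but finitely many} $n$ (Hurewicz) is mirrored exactly by the upgrade from non-domination to boundedness, which is what the stronger hypothesis $|X|<\mathfrak b$ supplies. The only point worth checking with care is that a \emph{single} bounding function $g$ handles every $x\in X$ simultaneously, so that the one sequence $(F_n)$ witnesses the $\Hub$ property for all points at once.
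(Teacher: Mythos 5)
Your proof is correct and is exactly what the paper intends: the paper states Theorem~\ref{TN02} with the remark ``Similarly we obtain the following,'' meaning the proof of Theorem~\ref{TN01} is to be repeated with the bounding-number characterization of $\mathfrak b$ in place of the non-domination argument for $\mathfrak d$, which is precisely what you did. Your observation that a single $\leq^*$-bound $g$ works for all $x$ simultaneously, yielding membership in $U_n[F_n]$ for all but finitely many $n$, is the intended upgrade from the Menger-bounded to the Hurewicz-bounded conclusion.
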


\begin{Th}
\label{TN03}
Every pre-Lindel\"{o}f space $(X,\mathbb{U})$ with $|X|<\cov(\mathcal{M})$ is $\Rob$.
\end{Th}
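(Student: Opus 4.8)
The plan is to follow the template of the proof of Theorem~\ref{TN01}, replacing the dominating number $\mathfrak{d}$ by the guessing characterization of $\cov(\mathcal{M})$ recorded just before Theorem~\ref{TN01}. Starting from an arbitrary sequence $(U_n)$ of members of $\mathbb{U}$, I would first invoke the pre-Lindel\"{o}f property to obtain, for each $n$, a countable set $A_n=\{x_m^{(n)} : m\in\mathbb{N}\}$ with $U_n[A_n]=X$.

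Next, for each $x\in X$ I would define $f_x\in\mathbb{N}^\mathbb{N}$ by $f_x(n)=\min\{m\in\mathbb{N} : x\in U_n[x_m^{(n)}]\}$, exactly as in the Menger case; this is well defined since $U_n[A_n]=X$ forces the set on the right to be nonempty for every $n$. The crucial difference is the type of combinatorial control required: because $|\{f_x : x\in X\}|\leq|X|<\cov(\mathcal{M})$, the stated characterization of $\cov(\mathcal{M})$ yields a single $g\in\mathbb{N}^\mathbb{N}$ that guesses this family, i.e.\ for every $x\in X$ the set $\{n : f_x(n)=g(n)\}$ is infinite.

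Finally, I would set $x_n=x_{g(n)}^{(n)}$ for each $n$; note that Rothberger-boundedness demands a single point per index, which fits the equality $f_x(n)=g(n)$ supplied by guessing, as opposed to the inequality $f_x(n)<g(n)$ used in the Menger argument, which there only produced the finite sets $F_n$. To verify $\cup_{n\in\mathbb{N}}U_n[x_n]=X$, fix $x\in X$; since the guessing set $\{n : f_x(n)=g(n)\}$ is infinite it is in particular nonempty, so there is an $n$ with $f_x(n)=g(n)$, and by the definition of $f_x$ this gives $x\in U_n[x_{g(n)}^{(n)}]=U_n[x_n]$.

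I do not anticipate a genuine obstacle, since the argument is structurally identical to that of Theorem~\ref{TN01}; the only point requiring care is to invoke the correct cardinal-characteristic characterization (guessing via equality, rather than domination or unboundedness) so that a \emph{single} point $x_n$, instead of a finite set, suffices to certify Rothberger-boundedness.
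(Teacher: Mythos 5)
Your proposal is correct and follows essentially the same route as the paper's own proof: define $f_x$ from the countable sets $A_n$ as in the Menger case, use the guessing characterization of $\cov(\mathcal{M})$ to obtain $g$, and take $x_n=x_{g(n)}^{(n)}$. The verification that $\{U_n[x_n] : n\in\mathbb{N}\}$ covers $X$ via the (infinite, hence nonempty) guessing set is exactly the paper's argument.
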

\begin{proof}
Let $(U_n)$ be a sequence of members of $\mathbb{U}$. Consider $A_n$ and $f_x$ as in Theorem~\ref{TN01} and proceed as follows.

Since the cardinality of $\{f_x : x\in X\}$ is less than $\cov(\mathcal{M})$, choose $g\in\mathbb{N}^\mathbb{N}$ such that $\{n : f_x(n)=g(n)\}$ is infinite for all $x\in X$.

Observe that if $x\in X$, then $f_x(n_x)=g(n_x)$ for some positive integer $n_x$; that is $x\in U_{n_x}[x_{g(n_x)}^{(n_x)}]$. Thus, $\{U_n[x_{g(n)}^{(n)}] : n\in\mathbb{N}\}$ is a cover of $X$, so that $X$ is $\Rob$.
\end{proof}

\begin{Th}
\label{TU8}
If $(X,\mathbb{U})$ is $\Me$-bounded, then any uniformly continuous image of $X$ into $\mathbb{N}^\mathbb{N}$ is non-dominating.
\end{Th}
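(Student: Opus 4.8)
The plan is to pass to the image $Y=f(X)\subseteq\mathbb N^{\mathbb N}$. First I would invoke \ref{LU404}: since $\Meb$ is preserved by uniformly continuous mappings, the uniformly continuous image $Y$, carrying the subspace uniformity inherited from the Baire uniformity $\mathbb B$, is itself $\Meb$. So it suffices to prove that a $\Meb$ subspace $Y$ of $(\mathbb N^{\mathbb N},\mathbb B)$ is non-dominating, i.e. to produce a single $g\in\mathbb N^{\mathbb N}$ with $g\not\le^* y$ for every $y\in Y$. Here I use that a base for $\mathbb B$ is given by the entourages $V_m=\{(a,b):a\upharpoonright m=b\upharpoonright m\}$, so that $V_m[h]$ is exactly the set of functions agreeing with $h$ on the first $m$ coordinates.

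The difficulty, which I expect to be the main obstacle, is that one application of the $\Meb$ property to the sequence $(V_m)$ only returns finite sets $(G_m)$ with $Y\subseteq\bigcup_m V_m[G_m]$. From this, each $y\in Y$ agrees with some member of some single $G_m$ on an initial segment, which lets me beat $y$ at only one coordinate. But non-domination requires $g(k)>y(k)$ for \emph{infinitely} many $k$. Thus a single selection is inherently too weak, and the real work is to convert ``beaten once'' into ``beaten infinitely often''.

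To overcome this I would apply the $\Meb$ property countably many times, once for each $j\in\mathbb N$, each time with a sequence of entourages that only ``switches on'' beyond coordinate $j$. Concretely, for each $j$ I apply $\Meb$ to the sequence $(V_{j+n})_{n\ge 1}$, obtaining finite sets $(G^{(j)}_n)_{n\ge 1}$ in $Y$ with $Y\subseteq\bigcup_{n\ge 1}V_{j+n}[G^{(j)}_n]$. Because $V_{j+n}[h]$ pins down the coordinate $k=j+n-1\ge j$, I define $g_j(k)=1+\max\{h(k):h\in G^{(j)}_{k-j+1}\}$ for $k\ge j$ (and $g_j(k)=0$ otherwise); finiteness of each $G^{(j)}_n$ makes this a genuine element of $\mathbb N^{\mathbb N}$. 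The point of the shift is that every $y\in Y$ lies in some $V_{j+n}[G^{(j)}_n]$, which forces $g_j(k)>y(k)$ for some coordinate $k\ge j$. I then set $g(k)=\max\{g_j(k):j\le k\}$, again a well-defined function since only finitely many indices $j$ occur for each $k$.

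Finally I would conclude as follows. Fix $y\in Y$. For every $j$ there is a coordinate $k_j\ge j$ with $g_j(k_j)>y(k_j)$; since $j\le k_j$ we get $g(k_j)\ge g_j(k_j)>y(k_j)$, and as $k_j\ge j$ these coordinates are unbounded, hence there are infinitely many $k$ with $g(k)>y(k)$. Therefore $g\not\le^* y$ for every $y\in Y$, so $Y$ is non-dominating, as required. (If one prefers to stay inside $X$ rather than pass to $Y$, the identical argument runs there: for each pair $(j,n)$ use uniform continuity to pick $U^{(j)}_n\in\mathbb U$ with $(x,x')\in U^{(j)}_n$ implying $f(x)\upharpoonright(j+n)=f(x')\upharpoonright(j+n)$, apply $\Meb$ to $(U^{(j)}_n)_{n\ge 1}$, and push the finite witnessing sets forward under $f$ to obtain the $G^{(j)}_n$.)
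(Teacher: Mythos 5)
Your proof is correct and follows essentially the same strategy as the paper's: reduce via Theorem~\ref{LU404} to an $\Meb$ subspace of $\mathbb{N}^\mathbb{N}$, apply the $\Meb$ property countably many times (each application localized so that its guaranteed ``win'' occurs in a prescribed region of coordinates), and assemble a single witness $g$ by a coordinate-wise max-plus-one over the finite selected sets. The only cosmetic difference is the localization device: the paper partitions $\mathbb{N}$ into infinitely many disjoint infinite sets $P_n$ and uses the single-coordinate agreement entourages $U_k$, whereas you use the shifted tails $(V_{j+n})_{n\geq 1}$ of initial-segment entourages together with a maximum over $j\leq k$; both mechanisms yield infinitely many coordinates at which $g$ exceeds each element of the image.
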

\begin{proof}
In view of Theorem~\ref{LU404}, we can assume that $X$ is an $\Meb$ subspace of $\mathbb{N}^\mathbb{N}$. For each $n\in\mathbb{N}$ consider $U_n=\{(\varphi,\psi)\in\mathbb{N}^\mathbb{N}\times\mathbb{N}^\mathbb{N} : \varphi(n)=\psi(n)\}\in\mathbb{B}$. Let $\{P_n : n\in\mathbb{N}\}$ be a partition of $\mathbb{N}$ into pairwise disjoint infinite subsets.  For each $n\in\mathbb{N}$, apply the $\Meb$ property of $X$ to $(U_k : k\in\ P_n)$ to obtain a sequence $(F_k : k\in P_n)$ of finite subsets of $X$ such that $X\subseteq\cup_{k\in P_n} U_k[F_k]$. Thus, $F_n$ is defined for each $n\in \mathbb N$. Now define $g:\mathbb{N}\to\mathbb{N}$ by $g(n)=1+\max\{f(n) : f\in F_n\}$. To complete the proof we show that $X$ is not dominating (which is witnessed by $g$).

Let $f\in X$. For each $k\in\mathbb{N}$ choose $n_k\in P_k$ such that $f\in U_{n_k}[F_{n_k}]$. Again choose for each $k\in\mathbb{N}$ a $f_k\in F_{n_k}$ such that $(f,f_k)\in U_{n_k}$; that is $f(n_k)=f_k(n_k)$ for all $k\in\mathbb{N}$. Consequently, $f(n_k)<g(n_k)$ for all $k\in\mathbb{N}$ and hence the set $\{n\in\mathbb{N} : g(n)\nleq f(n)\}$ is infinite.
\end{proof}

\begin{Th}
\label{TU9}
If $(X,\mathbb{U})$ is $\Hu$-bounded, then any uniformly continuous image of $X$ into $\mathbb{N}^\mathbb{N}$ is bounded (with respect to $\leq^*$).
\end{Th}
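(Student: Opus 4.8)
The plan is to follow the template of the proof of Theorem~\ref{TU8}, but to exploit the feature that the $\Hu$-bounded condition already carries the phrase ``for all but finitely many $n$'', which is precisely the quantifier appearing in the definition of $\leq^*$. This should make the argument strictly cleaner than the Menger case, with no need for the auxiliary partition of $\mathbb{N}$. First I would invoke Theorem~\ref{LU404}: since $\Hu$-boundedness is preserved under uniformly continuous mappings, the image of $X$ under any uniformly continuous map into $\mathbb{N}^\mathbb{N}$ is itself an $\Hu$-bounded subspace of $\mathbb{N}^\mathbb{N}$, equipped with the subspace uniformity inherited from the Baire uniformity $\mathbb{B}$. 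Hence it suffices to prove that every $\Hu$-bounded subspace $X\subseteq\mathbb{N}^\mathbb{N}$ is bounded with respect to $\leq^*$.

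For the core argument I would, exactly as in Theorem~\ref{TU8}, consider for each $n\in\mathbb{N}$ the entourage $U_n=\{(\varphi,\psi)\in\mathbb{N}^\mathbb{N}\times\mathbb{N}^\mathbb{N} : \varphi(n)=\psi(n)\}\in\mathbb{B}$. Applying the $\Hu$-bounded property of the subspace $X$ (through Lemma~\ref{LU603}) to the single sequence $(U_n)$, I obtain a sequence $(F_n)$ of finite subsets of $X$ such that each $f\in X$ belongs to $U_n[F_n]$ for all but finitely many $n$. I would then define the candidate bounding function $g\in\mathbb{N}^\mathbb{N}$ by $g(n)=1+\max\{h(n) : h\in F_n\}$, reading the maximum over an empty $F_n$ as $0$ so that $g$ is everywhere defined.

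It then remains to check that $g$ witnesses boundedness. For a fixed $f\in X$ the choice of $(F_n)$ furnishes an $n_f\in\mathbb{N}$ with $f\in U_n[F_n]$ for all $n\geq n_f$; for each such $n$ one selects $h\in F_n$ with $(f,h)\in U_n$, i.e. $f(n)=h(n)$, and therefore $f(n)=h(n)\leq\max\{h'(n):h'\in F_n\}<g(n)$. This gives $f(n)<g(n)$ for all $n\geq n_f$, that is $f\leq^* g$; as $f\in X$ was arbitrary, $g$ dominates every member of $X$, so $X$ is bounded.

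I do not expect a genuine obstacle here. The only points that need a little care are the trivial bookkeeping when some $F_n$ is empty, and the remark that the ``all but finitely many $n$'' supplied by the $\Hu$-bounded hypothesis is exactly the cofinite set of coordinates demanded by $\leq^*$. The decisive simplification relative to Theorem~\ref{TU8} is precisely this: the Menger proof had to manufacture infinitely many good coordinates by splitting $\mathbb{N}$ into infinitely many infinite pieces $P_n$ and collecting one good index from each, whereas the Hurewicz hypothesis delivers a cofinite set of good coordinates at once, yielding eventual domination directly.
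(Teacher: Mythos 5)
Your proposal is correct and is essentially the paper's own (very terse) proof written out in full: the same reduction via Theorem~\ref{LU404} to an $\Hub$ subspace of $\mathbb{N}^\mathbb{N}$, the same entourages $U_n$, and the same bounding function $g(n)=1+\max\{h(n) : h\in F_n\}$. Your explicit removal of the partition $\{P_n : n\in\mathbb{N}\}$ is exactly the right reading of the paper's instruction to model the argument on Theorem~\ref{TU8}: retaining the partition would only make the exceptional set meet each $P_k$ in a finite set, which does not yield eventual domination, so the one-shot application of the Hurewicz hypothesis is not merely cleaner but necessary.
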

\begin{proof}
The proof is modelled in the proof of Theorem~\ref{TU8}. It remains to observe that the respective map $g$ dominates every element of $X$.
\end{proof}

\begin{Th}
\label{TU10}
If $(X,\mathbb{U})$ is $\Ro$-bounded, then any uniformly continuous image of $X$ into $\mathbb{N}^\mathbb{N}$ can be guessed.
\end{Th}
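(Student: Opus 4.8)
The plan is to mimic the proof of Theorem~\ref{TU8}, replacing the finite sets supplied by the Menger-bounded property with the single points supplied by the Rothberger-bounded property. First I would invoke Theorem~\ref{LU404}: since $\Rob$ is hereditary and preserved by uniformly continuous mappings, it suffices to show that every $\Rob$ subspace $X$ of $\mathbb{N}^\mathbb{N}$ (equipped with the uniformity $\mathbb{B}$ induced by the Baire metric) can be guessed. I would reuse the entourages $U_n=\{(\varphi,\psi)\in\mathbb{N}^\mathbb{N}\times\mathbb{N}^\mathbb{N} : \varphi(n)=\psi(n)\}\in\mathbb{B}$ and fix a partition $\{P_k : k\in\mathbb{N}\}$ of $\mathbb{N}$ into pairwise disjoint infinite subsets.

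The key step is the following substitution. For each $k\in\mathbb{N}$, I would apply the $\Rob$ property of $X$ to the sequence $(U_j : j\in P_k)$ to obtain a point $x_j\in X$ for each $j\in P_k$ with $X\subseteq\bigcup_{j\in P_k}U_j[x_j]$. Since $\{P_k\}$ partitions $\mathbb{N}$, a point $x_n$ is thereby defined for every $n\in\mathbb{N}$, and I would simply set $g(n)=x_n(n)$. Observe that $f\in U_n[x_n]$ holds precisely when $f(n)=x_n(n)=g(n)$, which is why this choice of $g$ is the natural analogue of the $1+\max$ construction in Theorem~\ref{TU8}, now producing an equality rather than a strict inequality.

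To verify that $g$ guesses $X$, I would take an arbitrary $f\in X$ and an arbitrary $k\in\mathbb{N}$. Because $X\subseteq\bigcup_{j\in P_k}U_j[x_j]$, there is some $n_k\in P_k$ with $f\in U_{n_k}[x_{n_k}]$, whence $f(n_k)=g(n_k)$ by the observation above. Since the index sets $P_k$ are pairwise disjoint, the integers $n_k$ are distinct, so $\{n\in\mathbb{N} : f(n)=g(n)\}$ is infinite; that is, $g$ guesses $f$. As $f\in X$ was arbitrary, $g$ guesses $X$.

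The only genuinely delicate point is the role of the partition. A single application of the $\Rob$ property to $(U_n)$ would yield, for each $f$, just one index $n$ with $f(n)=g(n)$, which is far too weak for guessing (which demands infinitely many agreements). Splitting $\mathbb{N}$ into infinitely many infinite blocks and extracting at least one agreement from each block is exactly what promotes ``one agreement'' to ``infinitely many agreements,'' and this is the crux that mirrors, yet strengthens from inequality to equality, the corresponding mechanism in Theorems~\ref{TU8} and~\ref{TU9}.
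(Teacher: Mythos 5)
Your proposal is correct and follows essentially the same route as the paper's own proof: reduce via Theorem~\ref{LU404} to an $\Rob$ subspace of $\mathbb{N}^\mathbb{N}$, use the entourages $U_n$ and a partition of $\mathbb{N}$ into infinitely many infinite blocks, apply the $\Rob$ property blockwise to get points $x_n$, and set $g(n)=x_n(n)$. The paper's argument is exactly this construction (with $f_k$ in place of your $x_j$), including the same verification that each block contributes a distinct index of agreement.
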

\begin{proof}
We closely follow the proof of Theorem~\ref{TU8}.
Choose $U_n$'s as in Theorem~\ref{TU8} and proceed with the following modifications. For each $n\in\mathbb{N}$ choose a sequence $(f_k : k\in P_n)$ of members of $X$ such that $X\subseteq\cup_{k\in P_n}U_k[f_k]$. Thus, $f_n$ is defined for each positive integer $n$. Define $g:\mathbb{N}\to\mathbb{N}$ by $g(n)=f_n(n)$. We now show that $X$ is guessed by $g$. Choose any $f\in X$ and for each $k\in\mathbb{N}$ choose $n_k\in P_k$ such that $(f,f_{n_k})\in U_{n_k}$. Clearly, the set $\{n\in\mathbb N: f(n)=g(n)\}$ is infinite (since it contains all $n_k$'s) and this completes the proof.
\end{proof}

\section{Local variations of uniform selection principles}
\subsection{Locally $\Ub$ spaces}
We now introduce the main definition of this paper.

\begin{Def}
 Let $(X,\mathbb{U})$ be a uniform space. Then $X$ is said to be locally $\Ub$ if for each $x\in X$ there exists a $U\in\mathbb{U}$ such that $U[x]$ is a $\Ub$ subspace of $X$.
\end{Def}
\begin{Rem}
Locally precompact and locally pre-Lindel\"{o}f spaces can be similarly defined.
\end{Rem}

From the above definition and \cite{SPUS}, we obtain the following implication diagram (where the abbreviations ${\sf C, \;H, \;L, \;M}$, ${\sf R}$ denote respectively compact, Hurewicz, Lindel\"{o}f, Menger and Rothberger spaces, the prefixes $l$, ${\sf p}$ stand respectively for `locally' and `pre-' and the suffix ${\sf b}$ stands for `-bounded').

\begin{figure}[h]
\begin{adjustbox}{max width=\textwidth,max height=\textheight,keepaspectratio,center}
\begin{tikzcd}[column sep=4ex,row sep=4ex,arrows={crossing over}]
&& {\sf L} \arrow[rr]&&
{\sf pL}  \arrow[rr]&&
l{\sf pL}
\\
& {\sf M} \arrow[ur]\arrow[rr]&&
{\sf Mb}\arrow[ur]\arrow[rr]&&
l{\sf Mb}\arrow[ur]&
\\
{\sf H} \arrow[ur]\arrow[rr]&&
{\sf Hb} \arrow[ur]\arrow[rr]&&
l{\sf Hb} \arrow[ur]&&
\\
&{\sf R}\arrow[uu]\arrow[rr]&&
{\sf Rb} \arrow[uu]\arrow[rr]&&
l{\sf Rb}\arrow[uu]&
\\
{\sf C} \arrow[uu]\arrow[rr]&&
{\sf pC} \arrow[rr]
\arrow[uu]&&
l{\sf pC}\arrow[uu]&&
\end{tikzcd}
\end{adjustbox}
 \caption{Diagram of local properties in uniform spaces}
 \label{dig1}
\end{figure}
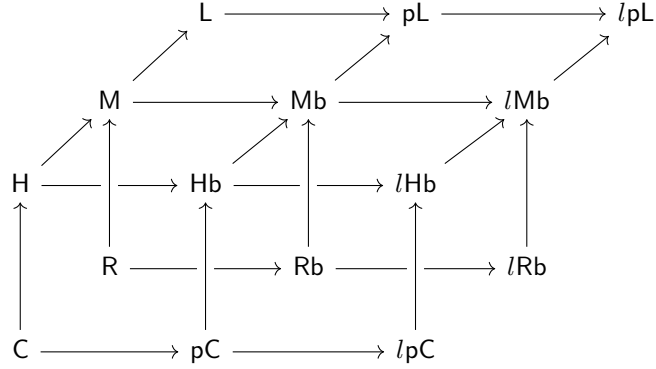

\newpage
We now present equivalent formulations of the new notions.
\begin{Th}
If $(X,\mathbb{U})$ is a uniform space, then the following assertions are equivalent.
\begin{enumerate}[label={\upshape(\arabic*)}]
  \item $X$ is locally $\Ub$.
  \item For each $x\in X$ and $V\in\mathbb{U}$ there exist a $U\in\mathbb{U}$ and a $\Ub$ subspace $Y$ of $X$ such that $U[x]\subseteq Y\subseteq V[x]$.
  \item For each $x\in X$ there exist a $U\in\mathbb{U}$ and a $\Ub$ subspace $Y$ of $X$ such that $U[x]\subseteq Y$.
  \item For each $x\in X$ there exists a $U\in\mathbb{U}$ such that $\overline{U[x]}$ is a $\Ub$ subspace of $X$.
\end{enumerate}
\end{Th}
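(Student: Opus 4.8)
The plan is to establish the cyclic chain of implications $(1)\Rightarrow(2)\Rightarrow(3)\Rightarrow(4)\Rightarrow(1)$. The guiding idea is to lean only on the heredity half of Theorem~\ref{LU404} (every subspace of a $\Ub$ space is $\Ub$) and never on closure-preservation; this is precisely what lets a single argument cover the Menger, Hurewicz and Rothberger cases simultaneously, since the closure statement~\ref{LU401} is available only for the Hurewicz case. The two structural features of a diagonal uniformity that I will use are that $\mathbb{U}$ is closed under finite intersections, that every member of $\mathbb{U}$ is symmetric (being an entourage, $V^{-1}=V$), and that for each $U\in\mathbb{U}$ there is a $V\in\mathbb{U}$ with $V\circ V\subseteq U$.

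The three routine links come first. For $(1)\Rightarrow(2)$, fix $x$ and let $U_0\in\mathbb{U}$ witness local $\Ub$-ness, so that $U_0[x]$ is $\Ub$. Given an arbitrary $V\in\mathbb{U}$, set $U=U_0\cap V\in\mathbb{U}$ and $Y=U[x]=U_0[x]\cap V[x]$. Then $U[x]=Y\subseteq V[x]$, while $Y\subseteq U_0[x]$ forces $Y$ to be $\Ub$ by heredity, which is exactly~(2). The implication $(2)\Rightarrow(3)$ is immediate: apply~(2) to any fixed $V\in\mathbb{U}$ and discard the inclusion $Y\subseteq V[x]$. Finally $(4)\Rightarrow(1)$ is again just heredity, since $U[x]\subseteq\overline{U[x]}$ exhibits $U[x]$ as a subspace of the $\Ub$ space $\overline{U[x]}$.

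The one step with real content is $(3)\Rightarrow(4)$, and it is where the uniform structure enters. Suppose $U[x]\subseteq Y$ with $Y$ a $\Ub$ subspace, and choose $V\in\mathbb{U}$ with $V\circ V\subseteq U$. The key computation is the standard closure estimate $\overline{V[x]}\subseteq(V\circ V)[x]$: if $y\in\overline{V[x]}$, then the neighborhood $V[y]$ of $y$ meets $V[x]$ at some point $z$, whence $(x,z)\in V$ and $(y,z)\in V$, so $(z,y)\in V^{-1}=V$ and therefore $(x,y)\in V\circ V$. Combining this with the hypothesis yields $\overline{V[x]}\subseteq(V\circ V)[x]\subseteq U[x]\subseteq Y$, so that $\overline{V[x]}$ is a subspace of $Y$ and hence $\Ub$ by heredity; thus $V$ witnesses~(4). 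The main (mild) obstacle is exactly this closure estimate, which forces the passage from $U$ to a smaller $V$ with $V\circ V\subseteq U$ and relies on the symmetry $V^{-1}=V$ built into the notion of an entourage. Once it is in hand, heredity does all the remaining work and closes the cycle, so that $(1)$--$(4)$ are equivalent.
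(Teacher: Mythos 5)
Your proof is correct, and it is strictly more complete than the one in the paper. The paper presents only $(1)\Rightarrow(2)$, with exactly your argument (it takes $Y=U[x]\cap V[x]$ where you take $Y=(U\cap V)[x]$; these are the same set), and leaves all remaining implications to the reader. What you add is the rest of the cycle $(2)\Rightarrow(3)\Rightarrow(4)\Rightarrow(1)$, and in particular the step $(3)\Rightarrow(4)$, which is the only implication that heredity alone cannot deliver. That step is genuinely needed and genuinely nontrivial here: the naive route --- take the closure of the $\Ub$ set $U[x]$ and invoke preservation of $\Ub$ under closures --- is unavailable, since Theorem~\ref{LU401} gives closure-preservation only for $\Hu$-bounded subspaces, while the theorem being proved covers all three properties. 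Your entourage estimate $\overline{V[x]}\subseteq(V\circ V)[x]$, which legitimately uses the symmetry $V^{-1}=V$ built into the paper's definition of entourage together with the axiom supplying $V$ with $V\circ V\subseteq U$, bypasses this difficulty and keeps the argument uniform in $\U$. The only point you use without justification is that $V[y]$ is a $\tau_{\mathbb{U}}$-neighbourhood of $y$: this is a standard fact about uniform topologies (the sets $W[y]$, $W\in\mathbb{U}$, form a neighbourhood base at $y$; see Engelking), but given the paper's bare definition of $\tau_{\mathbb{U}}$ it deserves a one-line citation or proof. With that noted, your chain $(1)\Rightarrow(2)\Rightarrow(3)\Rightarrow(4)\Rightarrow(1)$ is sound.
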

\begin{proof}
We only present proof of $(1)\Rightarrow (2)$. Let $x\in X$ and $V\in\mathbb{U}$. We can find a $U\in\mathbb{U}$ such that $U[x]$ is a $\Ub$ subspace of $X$. Clearly, $U\cap V\in\mathbb{U}$ and $Y=U[x]\cap V[x]$ is a $\Ub$ subspace of $X$ with $(U\cap V)[x]\subseteq Y\subseteq V[x]$. Hence $(2)$ holds.
\end{proof}

 We say that a metric space $(X,d)$ is locally $\Ub$ if for each $x\in X$ there exists an open set $V$ in $X$ such that $x\in V$ and $V$ is a $\Ub$ subspace of $(X,d)$. Likewise locally precompact and locally pre-Lindel\"{o}f metric spaces can also be defined.

\begin{Rem}
It is easy to observe that if the uniformity $\mathbb{U}$ on a set $X$ is induced by a metric $d$, then the uniform space $(X,\mathbb{U})$ is locally $\Ub$ if and only if the metric space $(X,d)$ is locally $\Ub$. Similar assertion holds for locally precompact and locally pre-Lindel\"{o}f metric spaces.
\end{Rem}

Using Theorem~\ref{TU4}, we have the following observation.

\begin{Prop}
\label{PU3}
Let $(X,\mathbb{U})$ be a uniform space such that $X$ is Lindel\"{o}f. Then $X$ is  $\Ub$ if and only if $X$ is locally $\Ub$.
\end{Prop}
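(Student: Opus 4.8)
The plan is to prove Proposition~\ref{PU3} by establishing the two implications separately. One direction is immediate: if $X$ is $\Ub$, then by Theorem~\ref{LU404} every subspace of $X$ is $\Ub$ (the properties are hereditary), so in particular for each $x\in X$ and \emph{any} $U\in\mathbb{U}$ the subspace $U[x]$ is $\Ub$. Hence $X$ is locally $\Ub$, and this holds without invoking the Lindel\"{o}f hypothesis at all. The substantive content is the converse, where Lindel\"{o}fness is essential.

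For the converse, suppose $X$ is locally $\Ub$. Then for each $x\in X$ we may choose $U_x\in\mathbb{U}$ such that $U_x[x]$ is a $\Ub$ subspace of $X$. The family $\{U_x[x] : x\in X\}$ is an open cover of $X$ (each $U_x[x]$ is a neighbourhood of $x$ in the topology $\tau_{\mathbb{U}}$). Invoking the Lindel\"{o}f property of $X$, I extract a countable subcover, say $\{U_{x_k}[x_k] : k\in\mathbb{N}\}$. Setting $X_k = U_{x_k}[x_k]$, we obtain a countable family of $\Ub$ subspaces with $X = \cup_{k\in\mathbb{N}} X_k$. Now Theorem~\ref{TU4} applies directly: since $X$ is a countable union of $\Ub$ subspaces, $X$ itself is $\Ub$.

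I expect the main (and essentially only) obstacle to be a bookkeeping point rather than a deep one: Theorem~\ref{TU4} is stated for a decomposition indexed by $\mathbb{N}$, so I must make sure the countable subcover really yields a countable (or finite) index set compatible with that theorem. If the Lindel\"{o}f cover reduces to finitely many pieces, one can pad the union with repetitions to obtain a genuinely $\mathbb{N}$-indexed family, or simply note that Theorem~\ref{TU4} covers the finite case as well. A second small point worth verifying is that each $U_x[x]$ is indeed open, i.e.\ a $\tau_{\mathbb{U}}$-neighbourhood of $x$; this follows from the axiom (iii) of a uniformity (choosing $V$ with $V\circ V\subseteq U_x$ shows $V[y]\subseteq U_x[x]$ for $y$ near $x$), so the sets genuinely form an open cover to which the Lindel\"{o}f hypothesis can be applied. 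Apart from these routine checks, the argument is a clean combination of heredity (Theorem~\ref{LU404}), the countable-union stability of $\Ub$ (Theorem~\ref{TU4}), and the definition of Lindel\"{o}fness.
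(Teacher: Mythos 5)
Your proof is correct and is essentially the paper's own argument: the paper states Proposition~\ref{PU3} as a direct consequence of Theorem~\ref{TU4}, with precisely this combination of heredity (Theorem~\ref{LU404}) for one direction and Lindel\"{o}f extraction plus the countable-union theorem for the other. One small imprecision to fix: $U_x[x]$ need not be open, only a neighbourhood of $x$ (your axiom-(iii) argument shows $x\in\Int U_x[x]$, not that $U_x[x]$ is open), so the Lindel\"{o}f property should be applied to the open cover $\{\Int U_x[x] : x\in X\}$, after which the corresponding countably many sets $U_{x_k}[x_k]$ still cover $X$ and Theorem~\ref{TU4} finishes the proof exactly as you describe.
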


\begin{Prop}
\label{PU2}
Let $(X,\mathbb{U})$ be a uniform space. If $X$ is locally $\U$, then $X$ is locally $\Ub$.
\end{Prop}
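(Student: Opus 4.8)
The plan is to reduce the local topological selection property to the corresponding uniform boundedness property on a suitable subspace, using the bridge provided by Theorem~\ref{LU403} together with the hereditary behaviour recorded in Theorem~\ref{LU404}.

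First I would fix an arbitrary point $x\in X$. Since $X$ is locally $\U$, there exist an open set $V$ and a subspace $Y$ of $X$ with the property $\U$ (in the topological sense of Menger, Hurewicz or Rothberger) such that $x\in V\subseteq Y$. Because the topology $\tau_{\mathbb U}$ is generated by the uniformity $\mathbb U$, the open neighbourhood $V$ of $x$ contains a basic uniform neighbourhood; that is, there is a $U\in\mathbb U$ with $U[x]\subseteq V$, whence $U[x]\subseteq Y$. This is the only place where the hypothesis ``$V$ open'' is used, and it is precisely what converts a topological neighbourhood into a member of $\mathbb U$.

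The next step is to transfer the topological selection property of $Y$ into a uniform one. Equipping $Y$ with the relative uniformity $\mathbb U_Y$, I would recall that $\tau_{\mathbb U_Y}$ coincides with the subspace topology that $Y$ inherits from $X$; thus $(Y,\mathbb U_Y)$ is a $\U$ space in the topological sense. Now Theorem~\ref{LU403}, applied to the uniform space $(Y,\mathbb U_Y)$, yields that $(Y,\mathbb U_Y)$ is $\Ub$. Finally, since $U[x]$ is a subspace of $Y$ and the $\Ub$ property is hereditary by Theorem~\ref{LU404}, the subspace $U[x]$ of $X$ is $\Ub$. As $U\in\mathbb U$ was produced from an arbitrary $x$, this exhibits $X$ as locally $\Ub$.

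I expect the only delicate point to be the compatibility of the relative uniformity $\mathbb U_Y$ with the subspace topology on $Y$, which is what legitimately lets me invoke the topological-to-uniform passage of Theorem~\ref{LU403} on $Y$ rather than on all of $X$; everything else is a routine chain built from the fact that uniform neighbourhoods form a base at each point and from the already established heredity. There is no genuine obstacle here, and the three cases where $\U$ is Menger, Hurewicz or Rothberger run in parallel, requiring no separate treatment.
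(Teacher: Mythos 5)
Your proposal is correct and follows essentially the same route as the paper's proof: fix $x$, use local $\U$ to obtain an open set and a $\U$ subspace $Y$, apply Theorem~\ref{LU403} to conclude $Y$ is $\Ub$, then shrink to a uniform neighbourhood $U[x]$ inside the open set and invoke heredity (Theorem~\ref{LU404}). Your extra care about the relative uniformity on $Y$ and the explicit citation of heredity only spell out steps the paper leaves implicit.
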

\begin{proof}
Let $x\in X$. Choose an open set $U$ and a $\U$ subspace $Y$ of $X$ such that $x\in U\subseteq Y$. By Theorem~\ref{LU403}, $Y$ is a $\Ub$ subspace of $X$. Since $x\in U$ is open in $X$, choose $V\in\mathbb{U}$ such that $V[x]\subseteq U$. We thus obtain a $\Ub$ subspace $V[x]$ of $X$, which shows that $X$ is locally $\Ub$.
\end{proof}

It can also be observed that locally compact (resp. locally Lindel\"{o}f) implies locally precompact (resp. locally pre-Lindel\"{o}f).

\begin{Th}
\label{P41}
If $(X,\mathbb{U})$ is a complete uniform space, then $X$ is locally Hurewicz if and only if $X$ is locally $\Hu$-bounded.
\end{Th}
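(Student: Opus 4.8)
The plan is to prove the two implications separately, with completeness entering only in the nontrivial direction, namely that a complete, locally $\Hu$-bounded space is locally Hurewicz. The reverse implication — that a locally Hurewicz space is locally $\Hu$-bounded — needs no completeness at all: it is exactly the instance $\U=$ Hurewicz of Proposition~\ref{PU2}, which I would simply invoke.

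For the converse, fix $x\in X$. Using local $\Hu$-boundedness I would first select $U\in\mathbb{U}$ for which $U[x]$ is a $\Hu$-bounded subspace of $X$. Passing to the closure, Theorem~\ref{LU401} gives that $\overline{U[x]}$ is again $\Hu$-bounded. The key move is then to recognise $\overline{U[x]}$ as a complete uniform space in its own right: since $X$ is complete and $\overline{U[x]}$ is closed in $X$, the relative uniformity $\mathbb{U}_{\overline{U[x]}}$ is complete, because a family of relatively closed subsets of $\overline{U[x]}$ with the finite intersection property and containing arbitrarily small sets is, verbatim, such a family of closed subsets of $X$, whose (nonempty) intersection automatically lies in $\overline{U[x]}$. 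With completeness secured, Theorem~\ref{LU402} applied to the uniform space $\overline{U[x]}$ upgrades $\Hu$-boundedness to the Hurewicz property; as the topology induced by $\mathbb{U}_{\overline{U[x]}}$ is the subspace topology, this means $\overline{U[x]}$ is a Hurewicz subspace of $X$.

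It then remains to exhibit the required open set. Here I would use the standard fact that $U[x]$ is a neighbourhood of $x$ in $\tau_{\mathbb{U}}$, so that $V:=\Int(U[x])$ is open with $x\in V\subseteq U[x]\subseteq\overline{U[x]}$; taking $Y=\overline{U[x]}$ then witnesses local Hurewiczness at $x$. Since $x$ was arbitrary, this completes the converse.

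The main obstacle I anticipate is the completeness-transfer step: the whole argument hinges on $\overline{U[x]}$ being complete so that Theorem~\ref{LU402} is applicable to it, and one must be careful that ``closed'' and ``contains arbitrarily small sets'' are read in the relative uniformity yet are genuinely inherited from the ambient complete space $X$. Everything else is a routine chain of the results established above, and the neighbourhood fact about $U[x]$ is standard for uniform spaces.
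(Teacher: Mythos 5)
Your proof is correct and follows essentially the same route as the paper's: necessity via Proposition~\ref{PU2}, and sufficiency by passing from $U[x]$ to $\overline{U[x]}$ via Theorem~\ref{LU401} and then applying Theorem~\ref{LU402} to $\overline{U[x]}$, which is complete as a closed subspace of the complete space $X$, with $x\in\Int U[x]\subseteq\overline{U[x]}$ witnessing local Hurewiczness. The only difference is that you spell out explicitly the inheritance of completeness by closed subspaces, a step the paper takes for granted.
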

\begin{proof}
The necessity follows from Proposition~\ref{PU2}.

Conversely assume that $X$ is locally $\Hu$-bounded. Let $x\in X$. Choose $U\in\mathbb{U}$ such that $U[x]$ is  $\Hu$-bounded. Clearly, $x\in\Int U[x]\subseteq\overline{U[x]}$. By Theorem~\ref{LU401}, $\overline{U[x]}$ is a $\Hu$-bounded subspace of $X$. Again by Theorem~\ref{LU402}, $\overline{U[x]}$ is a Hurewicz subspace of $X$ as $\overline{U[x]}$ is complete. Thus, $X$ is locally Hurewicz.
\end{proof}

\begin{Th}
Let $(X,\mathbb{U})$ be locally $\Ub$. An element $U\in\mathbb{U}$ is open in $X\times X$ if and only if $(Y\times Z)\cap U$ is open in $Y\times Z$ for every two $\Ub$ subspaces $Y$ and $Z$ of $X$.
\end{Th}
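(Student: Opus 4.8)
The plan is to dispatch the forward implication immediately and to concentrate all the work on the converse, where the local $\Ub$ hypothesis does the real job. For the necessity, suppose $U$ is open in $X\times X$. Then for \emph{any} two subspaces $Y$ and $Z$ of $X$ (the $\Ub$ assumption being irrelevant here), the set $(Y\times Z)\cap U$ is the trace of an open set on the subspace $Y\times Z$ and is therefore open in $Y\times Z$ by the definition of the subspace topology. No selection-principle input is needed.

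For the sufficiency, I would show that $U$ is a neighbourhood of each of its points. Fix $(x,y)\in U$. Using that $X$ is locally $\Ub$, choose $U_1,U_2\in\mathbb{U}$ such that $Y=U_1[x]$ and $Z=U_2[y]$ are $\Ub$ subspaces of $X$. The first thing to record is the standard fact that these are uniform neighbourhoods, i.e. $x\in\Int Y$ and $y\in\Int Z$: picking $V\in\mathbb{U}$ with $V\circ V\subseteq U_1$, one checks $V[z]\subseteq U_1[x]$ for every $z\in V[x]$, so $V[x]\subseteq\Int Y$ and in particular $x\in\Int Y$, and likewise for $Z$. Since $Y$ and $Z$ are $\Ub$, the hypothesis applies and $(Y\times Z)\cap U$ is open in $Y\times Z$. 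As $(x,y)$ lies in this set, I can select a basic open rectangle $A\times B$ of the product subspace $Y\times Z$ with $(x,y)\in A\times B\subseteq (Y\times Z)\cap U\subseteq U$, where $A$ is open in $Y$ and $B$ is open in $Z$.

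The remaining step is to descend from open sets of the subspaces back to open sets of $X$, and this is precisely where the interiors enter. Write $A=Y\cap A'$ and $B=Z\cap B'$ with $A',B'$ open in $X$, and put $A''=A'\cap\Int Y$ and $B''=B'\cap\Int Z$. Then $A''$ and $B''$ are open in $X$, contain $x$ and $y$ respectively, and satisfy $A''\subseteq A'\cap Y=A$ and $B''\subseteq B'\cap Z=B$; hence $A''\times B''$ is an open neighbourhood of $(x,y)$ in $X\times X$ with $A''\times B''\subseteq A\times B\subseteq U$. Since $(x,y)\in U$ was arbitrary, $U$ is open in $X\times X$.

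The only genuinely delicate point is that the $\Ub$ subspaces $U_1[x]$ and $U_2[y]$ supplied by local $\Ub$-ness are merely neighbourhoods of $x$ and $y$ and need not be open in $X$; consequently a set open in the subspace $Y$ need not be open in $X$. Intersecting with $\Int Y$ and $\Int Z$ is exactly the device that repairs this gap, so verifying $x\in\Int Y$ and $y\in\Int Z$ is the small lemma one must not skip. Everything else is a routine manipulation of the product and subspace topologies.
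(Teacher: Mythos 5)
Your proof is correct and takes essentially the same route as the paper's: fix $(x,y)\in U$, use local $\Ub$-ness to get $\Ub$ neighbourhoods $U_1[x]$ and $U_2[y]$, apply the hypothesis to this pair, and pass to the interiors to upgrade relative openness in the subspace product to openness in $X\times X$. The only difference is bookkeeping — the paper intersects $(U_1[x]\times U_2[y])\cap U$ directly with $\Int U_1[x]\times\Int U_2[y]$, whereas you route through a basic open rectangle and explicitly prove the fact $x\in\Int U_1[x]$ that the paper dismisses as evident.
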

\begin{proof}

We only need to prove sufficiency. Let $U\in\mathbb{U}$ and $(x,y)\in U$. Since $X$ is locally $\Ub$, there exist $V,W\in\mathbb{U}$ such that $V[x]$ and $W[y]$ are $\Ub$ subspaces of $X$. Clearly, $(\Int V[x]\times \Int W[y])\cap U$ is open in $\Int V[x]\times\Int W[y]$ since $(V[x]\times W[y])\cap U$ is open in $V[x]\times W[y]$. Evidently $(x,y)\in(\Int V[x]\times\Int W[y])\cap U$ is open in $X\times X$ and hence $U$ is open in $X\times X$ as required.
\end{proof}

The following observation is due to Theorems~\ref{TN01}, \ref{TN02} and \ref{TN03} with necessary modifications.
\begin{Prop}
Every locally pre-Lindel\"{o}f space with cardinality less than $\mathfrak{d}$ (resp. $\mathfrak{b}$, $\cov(\mathcal{M})$) is locally $\Meb$ (resp. locally $\Hub$, locally $\Rob$).
\end{Prop}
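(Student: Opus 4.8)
The plan is to prove the three cases (locally $\Meb$, locally $\Hub$, locally $\Rob$) simultaneously by localizing the arguments of Theorems~\ref{TN01}, \ref{TN02} and \ref{TN03}. The key observation is that the defining condition of locally $\Ub$ is a pointwise condition, and the only additional global hypothesis we carry is the bound on $|X|$; since every subspace $Y\subseteq X$ satisfies $|Y|\leq|X|$, the same cardinal inequality is automatically inherited by every candidate neighbourhood. Thus I would first fix a point $x\in X$ and use local pre-Lindel\"{o}fness to produce an entourage $U\in\mathbb{U}$ such that $U[x]$ is a pre-Lindel\"{o}f subspace of $X$.

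The crux is then to apply the corresponding global theorem to this subspace. Concretely, for the Menger case I would observe that $U[x]$ is pre-Lindel\"{o}f with $|U[x]|\leq|X|<\mathfrak{d}$, so by Theorem~\ref{TN01} the subspace $U[x]$ is $\Meb$. Since $x$ was arbitrary and we have exhibited an entourage $U$ for which $U[x]$ is a $\Meb$ subspace of $X$, this is exactly the definition of locally $\Meb$. The Hurewicz and Rothberger cases are entirely parallel: under $|X|<\mathfrak{b}$ one invokes Theorem~\ref{TN02} to conclude $U[x]$ is $\Hub$, and under $|X|<\cov(\mathcal{M})$ one invokes Theorem~\ref{TN03} to conclude $U[x]$ is $\Rob$.

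The phrase \emph{with necessary modifications} in the statement signals the one subtlety that must be handled carefully: Theorems~\ref{TN01}--\ref{TN03} are stated for a pre-Lindel\"{o}f \emph{space}, whereas here the neighbourhood $U[x]$ is only a \emph{subspace}. I would therefore check that the relative uniformity $\mathbb{U}_{U[x]}$ on $U[x]$ makes $(U[x],\mathbb{U}_{U[x]})$ pre-Lindel\"{o}f in its own right (which follows directly from the definition of local pre-Lindel\"{o}fness), and that the three global theorems apply verbatim to this uniform space because their proofs use only the pre-Lindel\"{o}f property together with the cardinality bound. The subspace versions of the boundedness properties recorded in Lemma~\ref{LU603} (and its companions) guarantee that membership in $\cup_n U_n[F_n]$ computed inside $U[x]$ agrees with the ambient notion, so no genuine reworking of the covering arguments is needed.

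I expect the only real obstacle to be bookkeeping rather than mathematical depth: one must be sure that the cardinality hypothesis transfers to the subspace (immediate, since $|U[x]|\leq|X|$) and that pre-Lindel\"{o}fness of the subspace is exactly what local pre-Lindel\"{o}fness supplies. Once these two points are in place, the proof is a direct three-fold invocation of the earlier theorems, and no new combinatorics on $\mathbb{N}^\mathbb{N}$ is required.
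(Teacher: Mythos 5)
Your proposal is correct and is essentially the paper's intended argument: the paper offers no separate proof, stating only that the result follows from Theorems~\ref{TN01}, \ref{TN02} and \ref{TN03} ``with necessary modifications,'' and your route---apply those theorems verbatim to the pre-Lindel\"{o}f neighbourhood $(U[x],\mathbb{U}_{U[x]})$, whose cardinality is at most $|X|$---is precisely the clean way to carry out those modifications. Your attention to the fact that ``$\Ub$ subspace of $X$'' means $\Ub$ with respect to the relative uniformity, so the global theorems apply directly, closes the only point that needed checking.
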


\subsection{Some observations on locally $\Ub$ spaces}

We now present some preservation like properties of these local variations under certain topological operations.
\begin{Prop}
\label{P4}
Locally $\Ub$ property is hereditary.
\end{Prop}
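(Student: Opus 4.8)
The plan is to verify the local condition directly at an arbitrary point of an arbitrary subspace, reducing the whole matter to the fact (Theorem~\ref{LU404}) that $\Ub$ is itself hereditary. So let $Y$ be a subspace of a locally $\Ub$ space $(X,\mathbb{U})$, equipped with the relative uniformity $\mathbb{U}_Y$, and fix $y\in Y$. Since $X$ is locally $\Ub$, I would first produce a $U\in\mathbb{U}$ such that $U[y]$ is a $\Ub$ subspace of $X$, and then take the natural candidate entourage $U'=(Y\times Y)\cap U\in\mathbb{U}_Y$ in the relative uniformity.

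The key computation is the elementary identity $U'[y]=U[y]\cap Y$, which I would record explicitly: a point $z\in Y$ lies in $U'[y]$ precisely when $(y,z)\in U$, i.e.\ when $z\in U[y]$. This turns the problem into showing that $U[y]\cap Y$ is a $\Ub$ subspace of $Y$. Here two facts combine. First, since $U[y]\cap Y\subseteq U[y]$ and $U[y]$ is $\Ub$, heredity of the $\Ub$ property (Theorem~\ref{LU404}) gives that $U[y]\cap Y$ is $\Ub$. Second, I would invoke the transitivity of relative uniformities: the uniformity that $U[y]\cap Y$ inherits through $(Y,\mathbb{U}_Y)$ coincides with the one it inherits directly from $(X,\mathbb{U})$, because $U[y]\cap Y\subseteq Y$ forces $\{((U[y]\cap Y)\times(U[y]\cap Y))\cap V : V\in\mathbb{U}_Y\}=\{((U[y]\cap Y)\times(U[y]\cap Y))\cap U : U\in\mathbb{U}\}$. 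Consequently $U'[y]=U[y]\cap Y$ is a genuine $\Ub$ subspace of $Y$, and since $y$ was arbitrary, $Y$ is locally $\Ub$.

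The argument is short and the only point requiring care — the mildest of obstacles — is the bookkeeping of relative uniformities in the last step: one must be sure that restricting the entourage first to $Y$ and then to $U[y]\cap Y$ yields the same uniform structure as restricting directly from $X$, so that the hereditary conclusion of Theorem~\ref{LU404} transfers to the subspace $Y$ rather than merely to the ambient $X$. Once this transitivity is noted, no further work is needed.
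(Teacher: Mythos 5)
Your proof is correct. The paper in fact states Proposition~\ref{P4} without any proof, treating it as routine, and your argument is precisely the expected one: pass from $U\in\mathbb{U}$ to $U'=(Y\times Y)\cap U\in\mathbb{U}_Y$, note $U'[y]=U[y]\cap Y$, apply the heredity of $\Ub$ from Theorem~\ref{LU404} to the subset $U[y]\cap Y$ of the $\Ub$ space $U[y]$, and use transitivity of relative uniformities so that the conclusion is about a $\Ub$ subspace of $(Y,\mathbb{U}_Y)$ rather than of $X$. Your care on that last point (that restricting entourages via $Y$ or directly from $X$ gives the same uniformity on $U[y]\cap Y$, since $(U[y]\cap Y)\times(U[y]\cap Y)\subseteq Y\times Y$) is exactly the detail that makes the statement legitimate, so nothing is missing.
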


\begin{Prop}
 Let $f:X\to Y$ be an injective mapping from a set $X$ onto a uniform space $(Y,\mathbb{V})$. If $Y$ is locally $\Ub$, then $X$ is also locally $\Ub$.
\end{Prop}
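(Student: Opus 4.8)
The plan is to reduce the local statement to the already-established global one, Theorem~\ref{L4}, by restricting $f$ to a suitable neighbourhood. First I would recall that $f^{-1}(\mathbb{V})$ is generated by the base $\{g^{-1}(V):V\in\mathbb{V}\}$ with $g=f\times f$, and fix an arbitrary $x\in X$. Setting $y=f(x)$ and using that $Y$ is locally $\Ub$, I would choose $V\in\mathbb{V}$ such that $V[y]$ is a $\Ub$ subspace of $Y$; this $V$ will furnish the required neighbourhood of $x$.

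Next I would take $U=g^{-1}(V)\in f^{-1}(\mathbb{V})$ as the candidate entourage and identify the corresponding neighbourhood. A direct unwinding of the definitions gives
\[
U[x]=\{x'\in X:(f(x),f(x'))\in V\}=\{x'\in X:f(x')\in V[y]\}=f^{-1}(V[y]),
\]
so that, $f$ being a bijection, $f$ restricts to an injection of the set $U[x]$ onto the uniform space $(V[y],\mathbb{V}_{V[y]})$. It then remains only to transfer the $\Ub$ property of $V[y]$ across this restriction.

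The hard part will be verifying that the uniformity under which Theorem~\ref{L4} delivers $U[x]$ as $\Ub$ is exactly the relative uniformity that makes $U[x]$ a subspace of $X$. Concretely, I would check that the uniformity $(f|_{U[x]})^{-1}(\mathbb{V}_{V[y]})$ induced on $U[x]$ by the restriction coincides with the relative uniformity $(f^{-1}(\mathbb{V}))_{U[x]}$: both have as a base the sets $(U[x]\times U[x])\cap g^{-1}(W)$ for $W\in\mathbb{V}$, because every point of $U[x]=f^{-1}(V[y])$ is automatically carried into $V[y]$ by $f$, so that intersecting with $V[y]\times V[y]$ imposes no further restriction. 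Once this bookkeeping is settled, Theorem~\ref{L4} applied to $f|_{U[x]}\colon U[x]\to V[y]$ shows that $U[x]$ is a $\Ub$ subspace of $X$, and since $x$ was arbitrary, $X$ is locally $\Ub$. I expect the only genuine subtlety to be this matching of the induced and relative uniformities; the remainder is a routine localization of the proof of Theorem~\ref{L4}.
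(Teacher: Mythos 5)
Your proposal is correct and follows essentially the same route as the paper's own proof: both reduce to Theorem~\ref{L4} by restricting $f$ to $f^{-1}(V[f(x)])$, and both hinge on checking that the uniformity induced by the restricted map coincides with the relative uniformity inherited from $f^{-1}(\mathbb{V})$ (the paper's verification that $\mathbb{B}=\mathbb{B}'$). Your justification of that coincidence—intersecting with $V[y]\times V[y]$ imposes no further restriction since $f$ maps $U[x]$ into $V[y]$—is exactly the point the paper leaves as ``easy to verify.''
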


\begin{proof}
First recall that the uniformity $f^{-1}(\mathbb{V})$ on $X$ is generated by the base $\{g^{-1}(V) : V\in\mathbb{V}\}$, where $g=f\times f$. Now let $x\in X$. Since $Y$ is locally $\Ub$, there exists a $U\in\mathbb{V}$ such that $(Z,\mathbb{V}_Z)$ is a $\Ub$ subspace of $Y$, where $Z=U[f(x)]$. Let $W=f^{-1}(Z)$ and define $h:W \to Z$ by $h(u)=f(u)$. By Theorem~\ref{L4}, $(W,\mathbb S)$ is $\Ub$, where $\mathbb S=h^{-1}(\mathbb{V}_Z)$. Let $\tilde{g}=h\times h$. Now observe that $\mathbb{B}=\{\tilde{g}^{-1}(V) : V\in\mathbb{V}_Z\}$ is a base for the uniformity $\mathbb S$ on $W$ and $\mathbb{B}^\prime=\{(W\times W)\cap g^{-1}(V) : V\in\mathbb{V}\}$ is a base for the uniformity $\mathbb S^\prime = {f^{-1}(\mathbb{V})}_{W}$ on $W$. It is easy to verify that $\mathbb{B}=\mathbb{B}^\prime$ and hence $\mathbb S=\mathbb S^\prime$. Thus, $(W,\mathbb S)$ is a $\Ub$ subspace of $X$. Clearly, $g^{-1}(U)\in f^{-1}(\mathbb{V})$ and $g^{-1}(U)[x]=W$. Consequently, $g^{-1}(U)[x]$ is a $\Ub$ subspace of $X$ and the proof is now complete.
\end{proof}

We now observe that locally $\Ub$ property remains invariant under certain mappings. First we recall the following definitions from \cite{Bi-quotient}.
A surjective continuous mapping $f:X\to Y$ is said to be weakly perfect if $f$ is closed and $f^{-1}(y)$ is Lindel\"{o}f for each $y\in Y$.
Also a surjective continuous mapping $f:X\to Y$ is said to be bi-quotient if whenever $y\in Y$ and $\mathcal{U}$ is a cover of $f^{-1}(y)$ by open sets in $X$, then finitely many $f(U)$ with $U\in\mathcal{U}$ cover some open set containing $y$ in $Y$.
It is immediate that surjective continuous open (and also perfect) mappings are bi-quotient.

\begin{Th}
\hfill
\begin{enumerate}[wide=0pt, label={\upshape(\arabic*)},ref={\theTh(\arabic*)},leftmargin=*]
  \item 
  If $f:(X,\mathbb{U})\to(Y,\mathbb{V})$ is a uniformly continuous, bi-quotient mapping from a locally $\Ub$ space $X$ onto $Y$, then $Y$ is also  locally $\Ub$.
  \item 
  If $f:(X,\mathbb{U})\to(Y,\mathbb{V})$ is a uniformly continuous, weakly perfect mapping from a locally $\Ub$ space $X$ onto  $Y$, then $Y$ is also locally $\Ub$.
\end{enumerate}
\end{Th}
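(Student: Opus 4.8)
The plan is to prove both parts simultaneously where possible, since both bi-quotient and weakly perfect mappings share the key feature that covers of fibers project down to neighbourhoods of points in $Y$. Fix $y\in Y$ and pick any $x\in f^{-1}(y)$. Since $X$ is locally $\Ub$, there is a $U\in\mathbb{U}$ such that $U[x]$ is a $\Ub$ subspace of $X$, and by passing to $\Int U[x]$ we may assume we have an open $\Ub$ neighbourhood of $x$. The difficulty is that $f^{-1}(y)$ need not be a single point, so a single neighbourhood of one preimage will not suffice; this is precisely where the two hypotheses do their work, and handling the multi-point fiber is the main obstacle.

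For the bi-quotient case, I would first cover the fiber: for each $x\in f^{-1}(y)$ choose an open set $W_x$ (of the form $\Int U_x[x]$) that is $\Ub$. The family $\{W_x : x\in f^{-1}(y)\}$ is an open cover of $f^{-1}(y)$, so by bi-quotiency finitely many images $f(W_{x_1}),\dots,f(W_{x_k})$ cover an open set $O$ containing $y$. I would then set $Z=\bigcup_{i\le k} f(W_{x_i})$, note that $O\subseteq Z$ so $Z$ contains an open neighbourhood of $y$, and argue that $Z$ is $\Ub$. Each $f(W_{x_i})$ is a uniformly continuous image of the $\Ub$ subspace $W_{x_i}$, hence $\Ub$ by Theorem~\ref{LU404}; then the finite union $Z$ is $\Ub$ by Theorem~\ref{TU4}. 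Finally, since $O$ is open and contains $y$, there is a $V\in\mathbb{V}$ with $V[y]\subseteq O\subseteq Z$, and because $\Ub$ is hereditary (Theorem~\ref{LU404}) the subspace $V[y]$ is $\Ub$, witnessing that $Y$ is locally $\Ub$ at $y$.

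For the weakly perfect case the structure is analogous, but the finite subcover must be extracted from Lindel\"{o}fness of the fiber rather than directly from the covering hypothesis. Here $f^{-1}(y)$ is Lindel\"{o}f, so the open cover $\{W_x\}$ admits a countable subcover $\{W_{x_j} : j\in\mathbb{N}\}$; put $Z=\bigcup_{j} f(W_{x_j})$, which is $\Ub$ as a countable union of the $\Ub$ sets $f(W_{x_j})$ (again via Theorem~\ref{LU404} and Theorem~\ref{TU4}, the latter covering countable unions). The remaining point is to produce an open neighbourhood of $y$ inside $Z$, and this is exactly where closedness of $f$ enters: the set $X\setminus\bigcup_j W_{x_j}$ is closed and disjoint from $f^{-1}(y)$, so its image under the closed map $f$ is a closed set not containing $y$; its complement is then an open set $O\ni y$ with $f^{-1}(O)\subseteq\bigcup_j W_{x_j}$, whence $O\subseteq Z$. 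As before, choosing $V\in\mathbb{V}$ with $V[y]\subseteq O$ and invoking heredity finishes the argument. The genuinely delicate step in both parts is verifying that the projected set $Z$ actually contains a basic uniform neighbourhood $V[y]$ of $y$, and I would be careful to use the full strength of bi-quotiency (respectively closedness) rather than mere continuity to secure the open set $O$.
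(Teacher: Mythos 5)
Your proposal is correct and follows essentially the same route as the paper's proof: in the bi-quotient case you cover the fiber by open $\Ub$ neighbourhoods $\Int U_x[x]$, extract finitely many images covering an open set around $y$, and combine Theorem~\ref{LU404} (preservation under uniformly continuous maps and heredity) with Theorem~\ref{TU4} (unions) before shrinking to a uniform neighbourhood $V[y]$; in the weakly perfect case you use Lindel\"{o}fness of the fiber for a countable subcover and closedness of $f$ to produce the open set $O=Y\setminus f(X\setminus\bigcup_j W_{x_j})$, exactly as the paper does. No gaps.
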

\begin{proof}
$(1)$. For each $x\in X$ choose $U_x\in\mathbb{U}$ such that $U_x[x]$ is  $\Ub$. Consider the open cover  $\{\Int U_x[x] : x\in X\}$ of $X$. Let $y\in Y$. Since $f$ is a bi-quotient mapping, there is a finite set $\{\Int U_{x_i}[x_i] : 1\leq i\leq k\}\subseteq\{\Int U_x[x] : x\in X\}$ and an open set $V$ in $Y$ such that $y\in V\subseteq\cup_{i=1}^kf(\Int U_{x_i}[x_i])$; that is $V\subseteq\cup_{i=1}^kf(U_{x_i}[x_i])$. By Theorem~\ref{LU404} and Theorem~\ref{TU4}, $\cup_{i=1}^kf(U_{x_i}[x_i])$ is a $\Ub$ subspace of $Y$. Next we can choose $W\in\mathbb{V}$ with $W[y]\subseteq V$ since $V$ is a neighbourhood of $y$ in  $Y$. Consequently, $W[y]$ is the required $\Ub$ subspace of $Y$.\\
$(2)$. Let $y\in Y$ and say $A=f^{-1}(y)$. For each $x\in A$ choose $U_x\in\mathbb{U}$ such that $U_x[x]$ is $\Ub$. Consider the cover $\{\Int U_x[x] : x\in A\}$ of $A$ by open sets in $X$. Since $A$ is Lindel\"{o}f, there is a countable collection $\{\Int U_{x_n}[x_n] : n\in\mathbb{N}\}$ that covers $A$; that is $A\subseteq\cup_{n\in\mathbb{N}}U_{x_n}[x_n]$. Observe that $y\in Y\setminus f(X\setminus\cup_{n\in\mathbb{N}}\Int U_{x_n}[x_n])\subseteq f(\cup_{n\in\mathbb{N}}U_{x_n}[x_n])$. By Theorem~\ref{TU4} and Theorem~\ref{LU404}, $f(\cup_{n\in\mathbb{N}}U_{x_n}[x_n])$ is a $\Ub$ subspace of $Y$. Since $f$ is closed, $Y\setminus f(X\setminus\cup_{n\in\mathbb{N}}\Int U_{x_n}[x_n])$ is an open set in $Y$ containing $y$. Thus, we obtain a $V\in\mathbb{V}$ such that $V[y]\subseteq Y\setminus f(X\setminus\cup_{n\in\mathbb{N}}\Int U_{x_n}[x_n])$. Clearly, $V[y]$ is a $\Ub$ subspace of $Y$, which completes the proof.
\end{proof}

\begin{Cor}
\label{CU1}
If $f:(X,\mathbb{U})\to(Y,\mathbb{V})$ is a uniformly continuous, perfect (or a uniformly continuous, open) mapping from a locally $\Ub$ space $X$ onto $Y$, then $Y$ is also  locally $\Ub$.
\end{Cor}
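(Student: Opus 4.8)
The plan is to reduce both cases of the statement to part~(1) of the preceding theorem, exploiting the observation recorded just before that theorem. First I would note that a uniformly continuous mapping is in particular continuous, and that $f$ is assumed to be onto. The decisive step is then to invoke the remark preceding the theorem, namely that every surjective continuous open mapping and every surjective continuous perfect mapping is bi-quotient.

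Concretely, if $f:(X,\mathbb{U})\to(Y,\mathbb{V})$ is a uniformly continuous open mapping onto $Y$, then $f$ is a surjective continuous open mapping and hence bi-quotient; likewise, if $f$ is a uniformly continuous perfect mapping onto $Y$, then $f$ is a surjective continuous perfect mapping and hence again bi-quotient. In either situation $f$ is therefore a uniformly continuous, bi-quotient mapping from the locally $\Ub$ space $X$ onto $Y$. Applying part~(1) of the theorem, we conclude in both cases that $Y$ is locally $\Ub$.

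I expect essentially no obstacle here: the substantive work has already been carried out in the theorem and its preparatory remark, so the corollary amounts to recognizing open maps and perfect maps as special instances of bi-quotient maps. For completeness I would remark that the perfect case could alternatively be routed through part~(2), since a perfect map is closed with compact (hence Lindel\"{o}f) fibers and is therefore weakly perfect; but the bi-quotient route through part~(1) handles both cases uniformly and is the most economical, so that is the argument I would present.
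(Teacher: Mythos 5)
Your proposal is correct and matches the paper's intended argument exactly: the paper states the corollary without further proof precisely because it has already recorded, just before the theorem, that surjective continuous open (and also perfect) mappings are bi-quotient, so both cases follow from part (1). Your additional remark that the perfect case could instead be routed through part (2) via weak perfectness is also valid, but the bi-quotient reduction is the route the paper relies on.
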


\begin{Th}
\label{TU5}
Let $\{X_\alpha: \alpha\in\Lambda\}$ be a family of open subspaces of a uniform space $(X,\mathbb{U})$ satisfying $X=\cup_{\alpha\in\Lambda}X_\alpha$. Then $X$ is locally $\Ub$ if and only if each $X_\alpha$ is locally $\Ub$.
\end{Th}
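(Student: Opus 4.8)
The plan is to prove the two implications separately, leaning throughout on the heredity results already recorded. For necessity, I would observe that the conclusion is essentially immediate: since the locally $\Ub$ property is hereditary (Proposition~\ref{P4}), every subspace of a locally $\Ub$ space is again locally $\Ub$, and in particular each $X_\alpha$ inherits the property. It is worth flagging that this half uses neither the openness of the $X_\alpha$ nor the covering hypothesis $X=\cup_{\alpha\in\Lambda}X_\alpha$; both of these enter only in the converse.

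The substance lies in sufficiency. Fix $x\in X$ and choose $\alpha\in\Lambda$ with $x\in X_\alpha$. Because $X_\alpha$ is open in $X$, the definition of $\tau_{\mathbb U}$ furnishes some $V\in\mathbb U$ with $V[x]\subseteq X_\alpha$; this is precisely the step in which openness is indispensable, since it lets me transfer a subspace-neighborhood of $x$ inside $X_\alpha$ back into one realized by the ambient uniformity $\mathbb U$. Next I would apply the local $\Ub$ property of $X_\alpha$ at $x$ to obtain $U\in\mathbb U$ for which $((X_\alpha\times X_\alpha)\cap U)[x]=U[x]\cap X_\alpha$ is a $\Ub$ subspace of $X_\alpha$. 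Setting $W=U\cap V\in\mathbb U$, I then observe that $W[x]=U[x]\cap V[x]\subseteq U[x]\cap X_\alpha$, so that $W[x]$ is contained in a $\Ub$ subspace; heredity of $\Ub$ (Theorem~\ref{LU404}) yields that $W[x]$ is itself $\Ub$, exhibiting the desired $\Ub$ neighborhood of $x$ and establishing that $X$ is locally $\Ub$.

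The single point demanding care, and the main (if modest) obstacle, is reconciling the two senses of ``being a $\Ub$ subspace'': the set $U[x]\cap X_\alpha$ is first produced as a $\Ub$ subspace of $X_\alpha$, whereas I ultimately need $W[x]$ to be $\Ub$ as a subspace of $X$. I would resolve this by invoking the transitivity of relative uniformities, $(\mathbb U_{X_\alpha})_{S}=\mathbb U_{S}$ for $S\subseteq X_\alpha$, which guarantees that whether a subset is viewed inside $X_\alpha$ or inside $X$ makes no difference to its $\Ub$-ness. Once this identification is in place, heredity of $\Ub$ closes the argument, and no genuine difficulty remains.
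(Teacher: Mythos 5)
Your proof is correct and takes essentially the same approach as the paper: necessity via heredity of the local property, and sufficiency by using the openness of $X_\alpha$ to pass from the relative uniformity $\mathbb{U}_{X_\alpha}$ back to the ambient uniformity $\mathbb{U}$, then invoking heredity of $\Ub$. The only cosmetic difference is that the paper shrinks to $\Int_{X_\alpha}V[x]$ (open in $X$ because $X_\alpha$ is open) and picks one ambient entourage inside it, whereas you intersect two entourages; you also make explicit the transitivity of relative uniformities, which the paper leaves implicit.
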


\begin{proof}
 We only need to prove sufficiency. Choose $x\in X$ and say $x\in X_{\beta}$. Since $(X_{\beta},\mathbb{U}_{X_{\beta}})$ is locally $\Ub$, choose $V\in\mathbb{U}_{X_{\beta}}$ such that $V[x]$ is a $\Ub$ subspace of $X_{\beta}$. Since $x\in \Int_{X_{\beta}}V[x]$ is open in $X$, there exists a $U\in\mathbb{U}$ such that $U[x]\subseteq \Int_{X_{\beta}}V[x]$. It follows that $U[x]$ is  $\Ub$ and therefore, $X$ is locally $\Ub$.
\end{proof}

\begin{Cor}
\label{TU3}
Let $\{(X_\alpha,\mathbb{U}_\alpha) : \alpha\in\Lambda\}$ be a family of uniform spaces. The sum $\oplus_{\alpha\in\Lambda}X_\alpha$ is locally $\Ub$ if and only if each $X_\alpha$ is locally $\Ub$.
\end{Cor}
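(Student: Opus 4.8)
The plan is to deduce this directly from Theorem~\ref{TU5}, so the work reduces to identifying the summands as open subspaces of the sum carrying the correct relative uniformity. Write $S=\oplus_{\alpha\in\Lambda}X_\alpha$. As a set, $S$ is the disjoint union of the $X_\alpha$, and its uniformity $\mathbb{W}$ is the one generated by the base consisting of all sets of the form $W=\cup_{\alpha\in\Lambda}U_\alpha$, where $U_\alpha\in\mathbb{U}_\alpha$ and each $U_\alpha$ is regarded as a subset of $X_\alpha\times X_\alpha\subseteq S\times S$. First I would record two elementary facts about this construction. Since no such basic entourage $W$ relates points lying in distinct summands, for every $x\in X_\beta$ and every basic $W$ we have $W[x]=U_\beta[x]\subseteq X_\beta$; consequently each $X_\beta$ is a neighbourhood of each of its points and is therefore open (indeed clopen) in $S$. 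Moreover, intersecting the basic entourages of $\mathbb{W}$ with $X_\beta\times X_\beta$ returns precisely the entourages $U_\beta\in\mathbb{U}_\beta$, so the relative uniformity $\mathbb{W}_{X_\beta}$ coincides with the original uniformity $\mathbb{U}_\beta$.

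With these identifications in hand, the family $\{X_\alpha:\alpha\in\Lambda\}$ is a family of open subspaces of the uniform space $(S,\mathbb{W})$ whose union is all of $S$, and each $(X_\alpha,\mathbb{W}_{X_\alpha})$ is nothing but the given space $(X_\alpha,\mathbb{U}_\alpha)$. Hence Theorem~\ref{TU5} applies verbatim: $S$ is locally $\Ub$ if and only if each $X_\alpha$ is locally $\Ub$, which is exactly the assertion of the corollary.

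The only point requiring genuine care—and the step I would treat as the main obstacle—is the verification that $\mathbb{W}_{X_\beta}$ equals $\mathbb{U}_\beta$ and that $X_\beta$ is open, since everything else is an immediate invocation of Theorem~\ref{TU5}. This hinges entirely on the chosen description of a base for the sum uniformity $\mathbb{W}$; once that base is pinned down, both facts are routine, because a basic entourage of the sum acts on a point of $X_\beta$ exactly as the corresponding $U_\beta$ does and never reaches outside $X_\beta$. I would therefore state the base for $\mathbb{W}$ explicitly at the outset so that the subsequent identifications are unambiguous, after which the corollary follows with no further computation.
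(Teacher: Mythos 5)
Your proof is correct and takes essentially the same route as the paper: the corollary is stated there without proof as an immediate consequence of Theorem~\ref{TU5}, the implicit content being exactly your verification that each summand is an open (indeed clopen) subspace of the sum whose relative uniformity coincides with the original $\mathbb{U}_\alpha$. Your explicit treatment of the base for the sum uniformity fills in precisely what the paper leaves to the reader.
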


\begin{Th}
 Let $\{X_\alpha: \alpha\in\Lambda\}$ be a locally finite family of closed subspaces of a uniform space $(X,\mathbb{U})$ satisfying $X=\cup_{\alpha\in\Lambda}X_\alpha$. Then $X$ is locally $\Ub$ if and only if each $X_\alpha$ is locally $\Ub$.
\end{Th}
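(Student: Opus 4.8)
The plan is to dispose of necessity at once and, for sufficiency, to manufacture for each point a single entourage whose star decomposes into finitely many $\Ub$ pieces. For necessity I would simply invoke Proposition~\ref{P4}: locally $\Ub$ is hereditary, so every $X_\alpha$ inherits it. I would also record once, for use throughout, the transitivity fact that if $Z\subseteq X_\alpha$ is a $\Ub$ subspace of $X_\alpha$, then $Z$ is a $\Ub$ subspace of $X$; this holds because the relative uniformity on $Z$ induced through $X_\alpha$ coincides with the one induced directly from $X$, as $Z\times Z\subseteq X_\alpha\times X_\alpha$.

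For sufficiency I would fix $x\in X$ and exploit local finiteness to pick an open neighbourhood $N$ of $x$ meeting only finitely many members, say $X_{\alpha_1},\dots,X_{\alpha_n}$, together with $U_N\in\mathbb{U}$ satisfying $U_N[x]\subseteq N$. I would then split the indices into $J=\{i : x\in X_{\alpha_i}\}$ (nonempty, since $X=\bigcup_\alpha X_\alpha$) and its complement $K$. For $i\in J$, since $X_{\alpha_i}$ is locally $\Ub$ there is $U_i\in\mathbb{U}$ with $U_i[x]\cap X_{\alpha_i}$ a $\Ub$ subspace; for $i\in K$, closedness of $X_{\alpha_i}$ makes $X\setminus X_{\alpha_i}$ an open neighbourhood of $x$, yielding $W_i\in\mathbb{U}$ with $W_i[x]\cap X_{\alpha_i}=\emptyset$. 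The decisive move is to intersect everything into one entourage
\[
U \;=\; U_N \cap \bigcap_{i\in J} U_i \cap \bigcap_{i\in K} W_i \;\in\;\mathbb{U}.
\]

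The heart of the argument, which I expect to be the only genuinely delicate point, is verifying that $U[x]=\bigcup_{i\in J}\bigl(U[x]\cap X_{\alpha_i}\bigr)$. Indeed $U[x]\subseteq N$ forces it to avoid every $X_\alpha$ outside $\{X_{\alpha_1},\dots,X_{\alpha_n}\}$, while the factors $W_i$ wipe out the intersections with $X_{\alpha_i}$ for $i\in K$, so only the members containing $x$ survive. Each summand satisfies $U[x]\cap X_{\alpha_i}\subseteq U_i[x]\cap X_{\alpha_i}$ and is therefore $\Ub$ by heredity (Theorem~\ref{LU404}) together with the transitivity fact above; hence this finite union is $\Ub$ by Theorem~\ref{TU4}, producing a $\Ub$ star $U[x]$ at $x$ and showing that $X$ is locally $\Ub$.

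The main obstacle here is conceptual rather than computational: closedness is exactly what lets me discard the finitely many members not containing $x$ (on which I have no $\Ub$ control), and local finiteness is exactly what keeps the surviving union finite so that Theorem~\ref{TU4} applies. Once both hypotheses are harnessed in this way, the construction of the single entourage $U$ as a finite intersection, and the recognition of $U[x]$ as a finite union of $\Ub$ subspaces, are routine.
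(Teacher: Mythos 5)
Your proof is correct, and it takes a genuinely different route from the paper's. The paper argues through the uniform sum: since each $X_\alpha$ is locally $\Ub$, Corollary~\ref{TU3} makes $Y=\oplus_{\alpha\in\Lambda}X_\alpha$ locally $\Ub$, and the natural map $f:Y\to X$, $f(x,\alpha)=x$, is shown to be uniformly continuous and perfect --- local finiteness of the closed family is what makes $f$ closed, and point-finiteness of the family is what makes each fiber finite, hence compact --- so Corollary~\ref{CU1} (invariance of locally $\Ub$ under uniformly continuous perfect maps) transfers the property to $X$. You instead work pointwise: a neighbourhood $N$ from local finiteness, entourages $U_i$ from local $\Ub$-ness of the members containing $x$, entourages $W_i$ from closedness of the members avoiding $x$, and a single intersected entourage $U$ whose star satisfies $U[x]=\bigcup_{i\in J}\bigl(U[x]\cap X_{\alpha_i}\bigr)$, a finite union of $\Ub$ subspaces, hence $\Ub$ by Theorem~\ref{LU404} and Theorem~\ref{TU4}. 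Your verification is sound, including the two points that genuinely need care: the transitivity of relative uniformities (so that $\Ub$ subspaces of $X_{\alpha_i}$ are $\Ub$ subspaces of $X$), and the fact that local $\Ub$-ness of $X_{\alpha_i}$ at $x$ produces an entourage of the form $U_i\in\mathbb{U}$ with $U_i[x]\cap X_{\alpha_i}$ $\Ub$, which is exactly how the relative uniformity is defined in the paper. As for what each approach buys: the paper's proof is shorter on the page because it reuses machinery already established (sums and perfect-map invariance) and illustrates that machinery's utility, but it requires verifying that $f$ is uniformly continuous, closed, and has compact fibers; yours is self-contained, avoids the sum construction entirely, and makes completely explicit where each hypothesis enters --- closedness to discard the members not containing $x$, local finiteness to keep the surviving union finite. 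Both arguments ultimately rest on the same two primitives, heredity (Theorem~\ref{LU404}) and unions (Theorem~\ref{TU4}), together with heredity of the local property (Proposition~\ref{P4}) for necessity.
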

\begin{proof}
We only need to prove sufficiency. Let $\mathbb{V}$ be the uniformity on the sum $Y=\oplus_{\alpha\in\Lambda}X_\alpha$. Define $f:Y\to X$ by $f(x,\alpha)=x$. We claim that $f$ is a uniformly continuous, perfect mapping. Let $U\in\mathbb{U}$. For each $\alpha$ choose $U_\alpha\in\mathbb{U}_{X_\alpha}$ such that $\cup_{\alpha\in\Lambda}U_\alpha\subseteq U$. Also for each $\alpha$ define $V_\alpha=\{((x,\alpha),(y,\alpha)) : (x,y)\in U_\alpha\}$. Clearly, $V=\cup_{\alpha\in\Lambda} V_\alpha\in\mathbb{V}$. Let $(u,v)\in V$ and choose a $\beta$ so that $(u,v)\in V_\beta$. Thus, $u=(x,\beta)$ and $v=(y,\beta)$ for some $(x,y)\in U_\beta$ and hence $(f(u),f(v))\in U_\beta$, so that $(f(u),f(v))\in U$. It follows that $f$ is uniformly continuous.

To show that $f$ is perfect, for each $\alpha$ we define a continuous mapping $\varphi_\alpha:X_\alpha\to Y$ by $\varphi_\alpha(x)=(x,\alpha)$. Let $F$ be closed in $Y$. Since each $\varphi_\alpha^{-1}(F)$ is closed  and $X_\alpha$'s are locally finite in $X$, $f(F)=\cup_{\alpha\in\Lambda}\varphi_\alpha^{-1}(F)$ is also closed in $X$. Since $f^{-1}(x)$ is finite for every $x\in X$, it is compact. Thus, $f$ is a perfect mapping, so that by Corollary~\ref{CU1}, $X$ is locally $\Ub$.
\end{proof}

The next two results concern respectively the locally $\Hub$ and locally $\Meb$ property in product spaces.

\begin{Th}
Let $(X,\mathbb{U})$ and $(Y,\mathbb{V})$ be two uniform spaces. Then $(X\times Y,\mathbb{U}\times\mathbb{V})$ is locally $\Hu$-bounded if and only if both $(X,\mathbb{U})$ and $(Y,\mathbb{V})$ are locally $\Hu$-bounded.
\end{Th}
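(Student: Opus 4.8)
The plan is to reduce both implications to facts already proved for the (non-local) $\Hu$-bounded property, namely Theorem~\ref{LU501} and Theorem~\ref{LU404}. Throughout I use that the product uniformity $\mathbb{U}\times\mathbb{V}$ admits a base consisting of the entourages $W_{U,V}=\{((x_1,y_1),(x_2,y_2)) : (x_1,x_2)\in U,\ (y_1,y_2)\in V\}$ with $U\in\mathbb{U}$ and $V\in\mathbb{V}$, and that $W_{U,V}[(x,y)]=U[x]\times V[y]$. A routine but essential bookkeeping fact that I would record first is that the relative uniformity which $X\times Y$ induces on a rectangle $U[x]\times V[y]$ coincides with the product of the relative uniformities induced by $X$ on $U[x]$ and by $Y$ on $V[y]$; this lets me pass freely between ``subspace of the product'' and ``product of the subspaces''.

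For sufficiency, let $(x,y)\in X\times Y$. Using local $\Hu$-boundedness of $X$ and of $Y$ separately, I would pick $U\in\mathbb{U}$ and $V\in\mathbb{V}$ such that $U[x]$ is a $\Hu$-bounded subspace of $X$ and $V[y]$ is a $\Hu$-bounded subspace of $Y$. By the identification above together with Theorem~\ref{LU501}, the product $U[x]\times V[y]$ is a $\Hu$-bounded subspace of $X\times Y$. Since $U[x]\times V[y]=W_{U,V}[(x,y)]$ and $W_{U,V}\in\mathbb{U}\times\mathbb{V}$, this exhibits the required $\Hu$-bounded neighbourhood of $(x,y)$, so $X\times Y$ is locally $\Hu$-bounded.

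For necessity, assume $X\times Y$ is locally $\Hu$-bounded; by symmetry it suffices to treat $X$ (I tacitly assume $Y\neq\emptyset$ and fix a point $y_0\in Y$). Given $x\in X$, apply local $\Hu$-boundedness at $(x,y_0)$ to obtain some $W\in\mathbb{U}\times\mathbb{V}$ with $W[(x,y_0)]$ $\Hu$-bounded. Shrinking $W$ to a basic entourage $W_{U,V}\subseteq W$ and using that $\Hu$-boundedness is hereditary (Theorem~\ref{LU404}), I may assume $W=W_{U,V}$, so that $U[x]\times V[y_0]$ is a $\Hu$-bounded subspace of $X\times Y$. The projection $\pi_X$ restricted to $U[x]\times V[y_0]$ is uniformly continuous and maps onto $U[x]$ (here $V[y_0]\neq\emptyset$ since $\Delta\subseteq V$), so by Theorem~\ref{LU404} the image $U[x]$ is a $\Hu$-bounded subspace of $X$. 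As $U\in\mathbb{U}$, this shows $X$ is locally $\Hu$-bounded, and the symmetric argument handles $Y$.

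The genuine content is precisely the reduction to Theorem~\ref{LU501} and Theorem~\ref{LU404}; I expect no deeper obstacle. The one point demanding care is the uniformity identification on rectangles noted at the outset, which underlies the appeal to the product theorem in the sufficiency direction, whereas necessity is a slice-and-project argument that needs only the (easier) fact that restrictions and corestrictions of uniformly continuous maps remain uniformly continuous.
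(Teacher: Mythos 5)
Your proof is correct, and while its overall shape (projections for necessity, the product theorem for sufficiency) matches the paper's, both halves take a noticeably different route. In the sufficiency direction the paper, after obtaining that $U[x]\times V[y]$ is $\Hu$-bounded via Theorem~\ref{LU501}, passes to $\Int_X U[x]\times\Int_Y V[y]$ and extracts a $W\in\mathbb{U}\times\mathbb{V}$ with $W[(x,y)]$ contained in this open rectangle (then relies on heredity); you instead note the exact identity $W_{U,V}[(x,y)]=U[x]\times V[y]$ for the basic entourage $W_{U,V}$, which exhibits the required neighbourhood directly and makes the interior step superfluous --- a genuine simplification. In the necessity direction the paper simply invokes Corollary~\ref{CU1}: the projections are uniformly continuous open surjections, and such maps preserve the locally $\Hu$-bounded property; your slice-and-project argument avoids that corollary (and the bi-quotient machinery behind it), using only Theorem~\ref{LU404} --- heredity to shrink $W[(x,y_0)]$ to a rectangle $U[x]\times V[y_0]$, then preservation under uniformly continuous maps to project onto $U[x]$. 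Your version is more elementary and self-contained, at the cost of redoing locally what the corollary packages globally; the paper's is shorter given that Corollary~\ref{CU1} is already available. Two further points in your favour: you make explicit the identification of the subspace uniformity on a rectangle with the product of the subspace uniformities, which the paper's appeal to Theorem~\ref{LU501} needs but leaves tacit, and you flag the nonemptiness of the factors required for necessity, which the paper also leaves unstated.
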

\begin{proof}
 By considering the projection mappings, necessity follows from Corollary~\ref{CU1}.

Conversely assume that both $X$ and $Y$ are locally $\Hu$-bounded. Let $(x,y)\in X\times Y$. Choose $U\in\mathbb{U}$ and $V\in\mathbb{V}$ such that $U[x]$ and $V[y]$ are $\Hu$-bounded subspaces of $X$ and $Y$ respectively. By Theorem~\ref{LU501}, $U[x]\times V[y]$ is a $\Hu$-bounded subspace of $X\times Y$. Clearly, $(x,y)\in \Int_X U[x]\times\Int_Y V[y]$ is an open set in $X\times Y$. Thus, there is a $W\in\mathbb{U}\times\mathbb{V}$ such that $W[(x,y)]\subseteq \Int_X U[x]\times\Int_Y V[y]$. Consequently, $W[(x,y)]$ is a $\Hu$-bounded subspace of $X\times Y$ and hence $X\times Y$ is locally $\Hu$-bounded.
\end{proof}
 It is to be noted that locally $\Rob$ property is not productive (see Example~\ref{EU7}). We are unable to verify whether the above result holds for locally $\Meb$ spaces. Still, we have the following observation that can be easily verified. Moreover, the same example also demonstrates that `locally $\Meb$' cannot be replaced by `locally $\Rob$' in the following result.
\begin{Th}
Let $(X,\mathbb{U})$ and $(Y,\mathbb{V})$ be two uniform spaces. If $X$ is locally $\Me$-bounded and $Y$ is locally precompact, then $(X\times Y,\mathbb{U}\times\mathbb{V})$ is locally $\Me$-bounded.
\end{Th}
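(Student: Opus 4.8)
The plan is to mimic the proof of the preceding product theorem for locally $\Hu$-bounded spaces, replacing the use of Theorem~\ref{LU501} by the product result recalled in Section~2 (if the first factor is $\Me$-bounded and the second is precompact, then their product is $\Me$-bounded). Fix an arbitrary point $(x,y)\in X\times Y$. Using that $X$ is locally $\Me$-bounded, choose $U\in\mathbb{U}$ so that the subspace $U[x]$ is $\Me$-bounded; using that $Y$ is locally precompact, choose $V\in\mathbb{V}$ so that the subspace $V[y]$ is precompact.

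The key step is to pass to the product subspace. I would first record the routine identification of uniformities: the relative uniformity $(\mathbb{U}\times\mathbb{V})_{U[x]\times V[y]}$ induced on $U[x]\times V[y]$ as a subspace of $X\times Y$ coincides with the product uniformity $\mathbb{U}_{U[x]}\times\mathbb{V}_{V[y]}$ of the two subspaces. Granting this, the product result applied to the uniform spaces $(U[x],\mathbb{U}_{U[x]})$ (which is $\Me$-bounded) and $(V[y],\mathbb{V}_{V[y]})$ (which is precompact) shows that $U[x]\times V[y]$ is an $\Me$-bounded subspace of $X\times Y$.

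It then remains to produce an entourage neighbourhood of $(x,y)$ sitting inside this $\Me$-bounded set. Since $(x,y)\in\Int_X U[x]\times\Int_Y V[y]$, which is open in $X\times Y$, there is a $W\in\mathbb{U}\times\mathbb{V}$ with $W[(x,y)]\subseteq\Int_X U[x]\times\Int_Y V[y]\subseteq U[x]\times V[y]$. By the heredity of the $\Me$-bounded property (Theorem~\ref{LU404}), $W[(x,y)]$ is an $\Me$-bounded subspace of $X\times Y$. As $(x,y)$ was arbitrary, $X\times Y$ is locally $\Me$-bounded.

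The only point requiring care is the identification $(\mathbb{U}\times\mathbb{V})_{U[x]\times V[y]}=\mathbb{U}_{U[x]}\times\mathbb{V}_{V[y]}$, which is needed so that the product theorem from Section~2 --- stated for full uniform spaces --- may legitimately be invoked for the two subspaces; everything else is a direct transcription of the locally $\Hu$-bounded argument. Note that no analogue of Theorem~\ref{LU501} is available here, which is precisely why the precompactness hypothesis on $Y$ cannot be dropped and why the fully symmetric statement (both factors locally $\Me$-bounded) is not obtained by this method.
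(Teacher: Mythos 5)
Your proof is correct and is exactly the argument the paper has in mind: the paper states this result without proof (``can be easily verified''), intending the reader to transcribe the preceding locally $\Hu$-bounded product proof with Theorem~2.1(2) ($\Me$-bounded $\times$ precompact) in place of Theorem~\ref{LU501}, which is precisely what you do. Your explicit attention to the identification $(\mathbb{U}\times\mathbb{V})_{U[x]\times V[y]}=\mathbb{U}_{U[x]}\times\mathbb{V}_{V[y]}$ is a point the paper glosses over, and it is handled correctly.
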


We end this section with some observations on the Baire space.
Combining Theorem~\ref{TU8}, Theorem~\ref{TU9}, Theorem~\ref{TU10}, Corollary~\ref{CU1} and Proposition~\ref{PU3}, we obtain the following result.
\begin{Prop}
Let $(X,\mathbb{U})$ be a uniform space.
\begin{enumerate}[wide=0pt,label={\upshape(\arabic*)},ref={\theProp(\arabic*)},leftmargin=*]
  \item 
  If $X$ is  locally $\Meb$, then every open, uniformly continuous image of $X$ into $\mathbb{N}^\mathbb{N}$ is non-dominating.
  \item 
  If $X$ is locally $\Hub$, then every open, uniformly continuous image of $X$ into $\mathbb{N}^\mathbb{N}$ is bounded (with respect to $\leq^*$).
  \item 
  If $X$ is locally $\Rob$, then every open, uniformly continuous image of $X$ into $\mathbb{N}^\mathbb{N}$ can be guessed.
\end{enumerate}
\end{Prop}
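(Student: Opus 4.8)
The plan is to reduce each of the three statements to its global counterpart (Theorem~\ref{TU8}, Theorem~\ref{TU9}, Theorem~\ref{TU10}) by first transferring the relevant bounded property onto the image. Fix an open, uniformly continuous map $f:(X,\mathbb{U})\to\mathbb{N}^\mathbb{N}$ and let $Y=f(X)$ carry the subspace uniformity inherited from $\mathbb{N}^\mathbb{N}$. Regarded as a map onto $Y$, the function $f$ is still open (the image of an open set is open in $\mathbb{N}^\mathbb{N}$, hence open in $Y$) and still uniformly continuous, so $f:X\to Y$ is an open, uniformly continuous surjection to which Corollary~\ref{CU1} applies.

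For part $(1)$, since $X$ is locally $\Meb$, Corollary~\ref{CU1} yields that $Y$ is locally $\Meb$. Now $Y$ is a subspace of the Baire space $\mathbb{N}^\mathbb{N}$, which is second countable; as second countability is hereditary and implies the Lindel\"of property, $Y$ is Lindel\"of. Hence Proposition~\ref{PU3} upgrades ``locally $\Meb$'' to ``$\Meb$'' for $Y$. Finally, the inclusion $Y\hookrightarrow\mathbb{N}^\mathbb{N}$ is uniformly continuous with image $Y$, so Theorem~\ref{TU8} shows that $Y=f(X)$ is non-dominating. Parts $(2)$ and $(3)$ proceed verbatim: replace locally $\Meb$ by locally $\Hub$ (resp. locally $\Rob$), apply Corollary~\ref{CU1} and Proposition~\ref{PU3} for the Hurewicz (resp. Rothberger) version to conclude $Y$ is $\Hub$ (resp. $\Rob$), and then invoke Theorem~\ref{TU9} (resp. Theorem~\ref{TU10}) to obtain that $f(X)$ is bounded with respect to $\leq^*$ (resp. can be guessed).

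The only nonroutine point, and the step I expect to require the most care, is the passage from ``locally $\Ub$'' to ``$\Ub$'' for the image $Y$: this is exactly where Proposition~\ref{PU3} is used, and it hinges on $Y$ being Lindel\"of. That Lindel\"ofness is not given a priori but follows from the fact that $\mathbb{N}^\mathbb{N}$ is second countable and that second countability (hence the Lindel\"of property) passes to subspaces. Once this is noted, the remainder is a direct chaining of the cited results, and the openness hypothesis on $f$ is precisely what is needed to feed the map into Corollary~\ref{CU1}.
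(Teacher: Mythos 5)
Your proposal is correct and is essentially the paper's own proof: the paper states this result as a direct combination of Theorems~\ref{TU8}, \ref{TU9}, \ref{TU10}, Corollary~\ref{CU1} and Proposition~\ref{PU3}, which is exactly the chain you execute (push local $\Ub$ness onto the image via Corollary~\ref{CU1}, upgrade it to $\Ub$ via Proposition~\ref{PU3}, then apply the global theorems). The only detail you add is making explicit why Proposition~\ref{PU3} applies, namely that $f(X)\subseteq\mathbb{N}^{\mathbb{N}}$ is Lindel\"{o}f because the Baire space is second countable and second countability is hereditary.
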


\section{Examples}
We now present examples to illustrate the distinction between the behaviours of the local variations as introduced in this article.

Recall that a collection $\mathcal{A}$ of subsets of $\mathbb{N}$ is said to be an almost disjoint family if each $A\in\mathcal{A}$ is infinite and for every two distinct elements $B,C\in\mathcal{A}$, $|B\cap C|<\aleph_0$. Also $\mathcal{A}$ is said to be a maximal almost disjoint (in short, MAD) family if $\mathcal{A}$ is not contained in any larger almost disjoint family. For an almost disjoint family $\mathcal{A}$, let $\Psi(\mathcal{A})=\mathcal{A}\cup\mathbb{N}$ be the Isbell-Mr\'{o}wka space. It is well known that $\Psi(\mathcal{A})$ is a locally compact zero-dimensional Hausdorff space (and hence is a Tychonoff space) (see \cite{Gillman,Mrowka}).

Note that $\Ub$ implies locally $\Ub$. The following example shows that the class of locally $\Ub$ uniform spaces properly contains the class of $\Ub$ uniform spaces.
\begin{Ex}[{locally $\Ub\centernot\implies \Ub$}]
\label{EU3}
Let $\Psi(\mathcal{A})$ be the Isbell-Mr\'{o}wka space with $|\mathcal{A}|>\aleph_0$. Let $\mathbb{U}$ be the corresponding uniformity on $\Psi(\mathcal{A})$. Since $\mathcal{A}$ is an uncountable discrete subspace of $\Psi(\mathcal{A})$, it follows that $\Psi(\mathcal{A})$ is not pre-Lindel\"{o}f. Thus, $\Psi(\mathcal{A})$ cannot be $\Ub$. Since for each member of $\Psi(\mathcal{A})$ we can find a countable basic open set containing it, it follows that $\Psi(\mathcal{A})$ is locally $\Ub$ by Proposition~\ref{PU2}.
\end{Ex}

The preceding example can also be used to show the existence of locally pre-Lindel\"{o}f (resp. locally precompact) space which is not pre-Lindel\"{o}f (resp. precompact).

\begin{Rem}
\label{RN010}
 The Isbell-Mr\'{o}wka space $\Psi(\mathcal{A})$ is $\Ub$ if and only if $|\mathcal{A}|\leq\aleph_0$.
\end{Rem}

We now give an example of a locally $\Hu$-bounded (and hence locally $\Me$-bounded) space which is not locally $\Ro$-bounded. First we recall that a set $A\subseteq\mathbb{R}$ has strong measure zero if for every sequence $(\varepsilon_n)$ of positive reals there exists a sequence $(I_n)$ of intervals such that the length of the interval $I_n$ is less than $\varepsilon_n$ for all $n$ and $A\subseteq\cup_{n\in\mathbb{N}}I_n$.

\begin{Ex}[{locally $\Hu$ (or, $\Me$)-bounded$\centernot\implies$ locally $\Rob$}]
\label{EU8}
Consider $\mathbb{R}$ with the uniformity induced by the standard metric $d$. Clearly, $\mathbb{R}$ is locally precompact and hence is locally $\Hub$ and locally $\Meb$ as well. Now if possible suppose that $\mathbb{R}$ is locally $\Rob$. By Proposition~\ref{PU3}, $\mathbb{R}$ is $\Rob$. Let $(\varepsilon_n)$ be a sequence of positive real numbers. Apply $\Rob$ property of $\mathbb{R}$ to the sequence of $\frac{\varepsilon_n}{3}$-entourages to obtain a sequence $(x_n)$ of reals such that $\mathbb{R}=\cup_{n\in\mathbb{N}}B_d(x_n,\frac{\varepsilon_n}{3})$. This implies that $\mathbb{R}$ has strong measure zero, a contradiction. Thus, $\mathbb{R}$ fails to be locally $\Rob$.
\end{Ex}

The following is an example of a locally pre-Lindel\"{o}f space which is not locally $\Ub$.
\begin{Ex}[{locally pre-Lindel\"{o}f $\centernot\implies$ locally $\Ub$}]

Let $\mathbb{R}^\omega$ be the Tychonoff product of $\omega$-copies of $\mathbb{R}$. The topology of $\mathbb{R}^\omega$ is induced by the metric $d(x,y)=\sup
\limits_{i\in\mathbb{N}}\frac{1}{i}\min\{|x_i-y_i|,1\}$ on $\mathbb{R}^\omega$, where $x=(x_i)$ and $y=(y_i)$. Let $\mathbb{U}$ be the uniformity on $\mathbb{R}^\omega$ induced by $d$. We claim that $\mathbb{R}^\omega$ is not $\Me$-bounded. On the contrary, assume that $\mathbb{R}^\omega$ is $\Me$-bounded. For each $n\in\mathbb{N}$ set $U_n=\{(x,y)\in\mathbb{R}^\omega\times\mathbb{R}^\omega : x=(x_i),y=(y_i)\;\text{and}\;|x_n-y_n|<n\}$. For any $0<\varepsilon<1$, $U_n$ contains the entourage $U_{\frac{\varepsilon}{n}}=
\{(x,y)\in\mathbb{R}^\omega\times\mathbb{R}^\omega : d(x,y)<\frac{\varepsilon}{n}\}$, so that $U_n\in\mathbb{U}$ for each $n$. Apply $\Me$-bounded property of $\mathbb{R}^\omega$ to $(U_n)$ to obtain a sequence $(F_n)$ of finite subsets of $\mathbb{R}^\omega$ such that $\mathbb{R}^\omega=\cup_{n\in\mathbb{N}}U_n[F_n]$. Say $F_n=\{x^{(n,j)}=(x_i^{(n_,j)}) : 1\leq j\leq k_n\}$ for each $n$. Choose $x=(x_i)\in\mathbb{R}^\omega$ such that $x_n=n+\Sigma_{j=1}^{k_n}|x_n^{(n,j)}|$ for each $n$. Also choose $n_0\in\mathbb{N}$ such that $(x,x^{(n_0,j_0)})\in U_{n_0}$ for some $x^{(n_0,j_0)}\in F_{n_0}$. By the construction of $U_{n_0}$, we obtain $|x_{n_0}-x_{n_0}^{(n_0,j_0)}|<n_0$, which is a contradiction as $x_{n_0}=n_0+\Sigma_{j=1}^{k_{n_0}}|x_{n_0}^{(n_0,j)}|$. Thus, $\mathbb{R}^\omega$ is not $\Me$-bounded. Now apply Proposition~\ref{PU3} to conclude that $\mathbb{R}^\omega$ is not locally $\Me$-bounded (and hence it is not locally $\Ub$ by Figure~\ref{dig1}).
\end{Ex}

We now give an example of a locally $\Ub$ space which is not locally precompact.
\begin{Ex} [{locally $\Ub$ $\centernot\implies$ locally precompact}]
Let $X$ be the hedgehog metric space (see \cite{Engelking}) of spininess $\aleph_0$. Then $X$ is a complete $\Ub$ space. Observe that $X$ is not locally compact. Since every complete precompact space is compact, a similar observation in line of Theorem~\ref{P41} shows that a complete locally precompact space is locally compact. Therefore, $X$ is not locally precompact.
\end{Ex}

We now make a quick observation which reflects that Lemma~\ref{TU2} does not hold for locally $\Ub$ spaces.
\begin{Ex}
Let $\kappa>\aleph_0$. Consider the space $X$ with the hedgehog metric $\rho$ of spininess $\kappa$. Clearly, $X$ is complete, but not locally Lindel\"{o}f. Let $\mathbb{U}$ be the uniformity on $X$ induced by $\rho$. Since every complete locally pre-Lindel\"{o}f space is locally Lindel\"{o}f, we can say that $(X,\mathbb{U})$ is not locally pre-Lindel\"{o}f. It now follows that $(X,\mathbb{U})$ is not locally $\Ub$. But the uniform space $(X,\mathbb{D})$ with discrete uniformity $\mathbb{D}$ is locally $\Ub$.
\end{Ex}


Recall that the product of a $\Me$-bounded (resp. $\Hu$-bounded) space with a precompact space is again $\Me$-bounded (resp. $\Hu$-bounded), but if we replace `precompact'  by `locally precompact', then the product need not be $\Me$-bounded (resp. $\Hu$-bounded).

\begin{Ex}[{$\Me$ (resp. $\Hu$)-bounded $\times$ locally precompact $\centernot\implies$ $\Me$ (resp. $\Hu$)-bounded }]

Let $X$ be the Isbell-Mr\'{o}wka space $\Psi(\mathcal{A})$ as in Example~\ref{EU3}. Let $\mathbb{U}$ be the corresponding uniformity on $X$. By Example~\ref{EU3}, $X$ is locally precompact but not pre-Lindel\"{o}f. Let $Y=\mathbb{R}$ with the standard metric uniformity $\mathbb{V}$. Since $Y$ is $\sigma$-compact, $Y$ is $\Meb$ as well as $\Hub$. If possible suppose that $X\times Y$ is $\Me$-bounded. Using the projection mapping and then by applying Theorem~\ref{LU404}, we have $X$ is $\Meb$, which is a contradiction. Thus, $X\times Y$ is not $\Meb$ and hence it fails to be $\Hub$.
\end{Ex}

We now observe that locally $\Ro$-bounded property behaves somewhat differently from locally $\Me$-bounded and locally $\Hu$-bounded property.

\begin{Ex}[locally $\Ro$-bounded $\times$ locally precompact $\centernot\implies$ locally $\Ro$-bounded]
\label{EU7}
This example is about the product of a locally $\Ro$-bounded space and a locally precompact space which fails to be locally $\Ro$-bounded.
To prove this, consider a $\Ro$-bounded uniform space $X$ and $\mathbb R$ with the standard metric uniformity. Suppose if possible that $X\times\mathbb{R}$ is locally $\Ro$-bounded. By Corollary~\ref{CU1} and by means of projection mapping, it follows that $\mathbb{R}$ is locally $\Ro$-bounded, which is a contradiction (see Example~\ref{EU8}). Therefore, the product $X\times\mathbb{R}$ fails to be locally $\Ro$-bounded.
\end{Ex}

\begin{Rem}
  Product of a $\Ro$-bounded space and a precompact space need not be $\Ro$-bounded. The reason is as follows. If possible suppose that the assertion is true. Now consider Example~\ref{EU7}. Since $\mathbb R$ is a countable union of its precompact subspaces, our supposition together with Theorem~\ref{TU4} imply $X\times\mathbb{R}$ is $\Ro$-bounded. We then again arrive at a contradiction as in Example~\ref{EU7}. Thus, the product of a $\Ro$-bounded space and a precompact space need not be $\Ro$-bounded.
\end{Rem}

We now present an example of a $\Ub$ space for which Corollary~\ref{TU3} fails to hold.
\begin{Ex}
Consider the sum $\oplus_{\alpha\in\omega_1}X$ of $\omega_1$ copies of a $\Ub$ space $(X,\mathbb{U})$. The sum is uniformly isomorphic to  $(X\times\omega_1,\mathbb{U}\times\mathbb{D})$, where $\mathbb{D}$ is the discrete uniformity on $\omega_1$. By Theorem~\ref{LU404}, $X\times\omega_1$ is not $\Ub$ and consequently, $\oplus_{\alpha\in\omega_1}X$ is not $\Ub$.
\end{Ex}

\begin{Rem}
\mbox{}\hfill
\begin{enumerate}[wide=0pt,
label={\upshape(\arabic*)},
ref={\theRem(\arabic*)},leftmargin=*]
\item It is interesting to observe that Theorem~\ref{TU5} does not hold for $\Ub$ spaces. Consider the Isbell-Mr\'{o}wka space $\Psi(\mathcal{A})$ as in Example~\ref{EU3}. As noted in that example, each point of $\Psi(\mathcal{A})$ has a countable basic open set containing it. Thus, $\Psi(\mathcal{A})$, which is itself not $\Ub$, is a union of $\Ub$ open subspaces.
\item Also note that Theorems~\ref{TN01}, \ref{TN02} and \ref{TN03} can not be extended to locally pre-Lindel\"{o}f spaces.
Assume that $\omega_1<\min\{\mathfrak b, \cov(\mathcal M)\}$ (which implies $\omega_1<\mathfrak d$ too). Let $\Psi(\mathcal{A})$ be the Isbell-Mr\'{o}wka space with $|\mathcal{A}|=\omega_1$. By Example~\ref{EU3}, $\Psi(\mathcal{A})$ is locally pre-Lindel\"{o}f. However, by Remark~\ref{RN010}, $\Psi(\mathcal{A})$ is not $\Ub$.
\end{enumerate}
\end{Rem}

\noindent{\bf Acknowledgement:} The authors are thankful to the Referee for his/her several valuable suggestions which significantly improved the presentation of the paper.
\paragraph{}
\vskip .4cm

\end{document}